\title[Total variation of maximal functions]{Sharp total variation results for maximal functions}
\author{Jo\~ao P. G. Ramos}
\newcommand{\R}{\mathbb{R}}
\newcommand{\Z}{\mathbb{Z}}
\newcommand{\N}{\mathbb{N}}
\newcommand{\mmd}{\mathrm{d}}
\newcommand{\V}{\mathcal{V}}
\newtheorem{theorem}{Theorem}
\newtheorem{rmk}{Remark}
\newtheorem{lemm}{Lemma}
\newtheorem{prop}{Proposition}
\newtheorem{claim}{Claim} 
\newtheorem{subclaim}{Subclaim} 
\newtheorem{corol}{Corollary}
\newtheorem{proper}{Property} 
\def\Xint#1{\mathchoice
{\XXint\displaystyle\textstyle{#1}}%
{\XXint\textstyle\scriptstyle{#1}}%
{\XXint\scriptstyle\scriptscriptstyle{#1}}%
{\XXint\scriptscriptstyle\scriptscriptstyle{#1}}%
\!\int}
\def\XXint#1#2#3{{\setbox0=\hbox{$#1{#2#3}{\int}$}
\vcenter{\hbox{$#2#3$}}\kern-.5\wd0}}
\def\dashint{\Xint-}
\begin{document}
\begin{abstract} In this article, we prove some total variation inequalities for maximal functions. Our results deal with two possible generalizations of the results contained in Aldaz and Pérez Lázaro's work \cite{aldazperezlazaro}, one of whose considers a variable truncation of the maximal function, and the other one interpolates the centered and the
uncentered maximal functions. In both contexts, we find sharp constants for the desired inequalities, which can be viewed as progress towards the conjecture that the best constant for the variation inequality in the centered context is one. We also provide counterexamples showing that our methods do not apply outside the stated parameter ranges.  
\end{abstract}
\maketitle 
\section{Introduction} An object of major interest in Harmonic Analysis is the Hardy-Littlewood maximal function, 
which can be defined as 

$$ Mf(x) = \sup_{t \in \mathbb{R}_{+}} \frac{1}{2t} \int_{x-t}^{x+t} |f(s)| \mmd s.$$ 

Alternatively, one can also define its \emph{uncentered} version as $$\tilde{M}f(x) = \sup_{x \in I} \frac{1}{|I|} \int_I |f(s)| \mmd s.$$ The most classical result about these maximal functions is perhaps the Hardy--Littlewood--Wiener theorem, 
which states that both $M$ and $\tilde{M}$ map $L^p(\R)$ into itself for $1<p \le \infty$, and that in the case $p=1$ they satisfy a weak type inequality: 

$$ |\{x \in \R\colon Mf(x) > \lambda\}| \le \frac{C}{\lambda}\|f\|_1,$$
where $C = \frac{11 + \sqrt{61}}{12}$ is the best constant possible found by A. Melas \cite{Melas} for $M$. The same inequality also holds in the case of $\tilde{M}$ above, but this time with $C=2$ being the best constant, as shown by F. Riesz \cite{riesz}.\\ 

In the remarkable paper \cite{Kinnunen1}, J. Kinnunen proves, using functional analytic techniques and the aforementioned theorem, that, in fact, 
$M$ maps the Sobolev spaces $W^{1,p}(\R)$ into themselves, for $1<p\le \infty$. Kinnunen also proves that this result holds if we replace the standard maximal function by its uncentered version. 
This opened a new field of studies, and several other properties of this and other related maximal functions were studied. We mention, for example, \cite{emanuben, emanumatren, hajlaszonninen, kinnunenlindqvist, luiro}. \\ 

Since the Hardy-Littlewood maximal function fails to be in $L^1$ for every nontrivial function $f$ and the tools from functional analysis used are not available either in the case $p=1$, 
an important question was whether a bound of the form $\|(Mf)'\|_1 \le C \|f'\|_1$ could hold for every $f \in W^{1,1}$. \\

In the uncentered case, H. Tanaka \cite{Tanaka} provided us with a positive answer to this question. Explicitly, Tanaka proved that, whenever $f \in W^{1,1}(\R),$ then $\tilde{M}f$ is weakly differentiable, and it satisfies that $\|(\tilde{M}f)'\|_1 \le 2 \|f'\|_1$. Here, 
$W^{1,1}(\R)$ stands for the Sobolev space $\{f:\R \to \R\colon \|f\|_1 + \|f'\|_1 < + \infty \}.$ \\

Some years later, Aldaz and P\'erez L\'azaro \cite{aldazperezlazaro} improved Tanaka's result, showing that, whenever $f \in BV(\R)$, then the maximal function $\tilde{M}f$ is in fact 
\emph{absolutely continuous}, and $\V(\tilde{M}f) = \|(\tilde{M}f)'\|_1 \le \V(f)$, with $C=1$ being sharp, where we take the \emph{total variation of a function} to be $\V(f) := \sup_{\{x_1<\cdots<x_N\} = \mathcal{P}} \sum_{i=1}^{N-1} |f(x_{i+1}) - f(x_i)|,$ and consequently 
the space of \emph{bounded variation functions} as the space of functions $f: \R \to \R\colon \exists g; \, f= g \text{ a.e. and }\V(g) < +\infty.$ In this direction, J. Bober, E. Carneiro, K. Hughes and L. Pierce \cite{BCHP} studied the discrete version of this problem, obtaining similar results. \\

In the centered case, many questions remain unsolved. Surprisingly, it turned out to be \emph{harder} than the uncentered one, due to the contrast in smoothness of $Mf$ and $\tilde{M}f.$ 
In \cite{kurka}, O. Kurka showed the endpoint question to be true, that is, that $\V(Mf) \le C\V(f)$, with $C=240,004$. Unfortunately, his 
method does not give the best constant possible, with the standing conjecture being that $C=1$ is the sharp constant. \\ 

In \cite{temur}, F. Temur studied the discrete version of this problem, proving that 
for every $f \in BV(\mathbb{Z})$ we have $\V(Mf) \le C'\V(f),$ where $C' > 10^6$ is an absolute constant. The standing conjecture is again that $C'=1$ in this case, which was in part backed up by J. Madrid's optimal results \cite{josesito}:
If $f \in \ell^1(\mathbb{Z}),$ then $Mf \in BV(\mathbb{Z})$, and $\V(Mf) \le 2 \|f\|_1,$ with 2 being sharp in this inequality. \\ 

Our main theorems deal with -- as far as the author knows -- the first attempt to prove sharp bounded variation results for classical Hardy--Littlewood maximal functions. Indeed, we may see the classical, uncentered Hardy-Littlewood maximal function as 

\begin{align*}
\tilde{M}f(x) &= \sup_{x \in I} \frac{1}{|I|} \int_I |f(s)|\mmd s = \sup_{(y,t)\colon |x-y|\le t} \frac{1}{2t} \int_{y-t}^{y+t} |f(s)| \mmd s. \cr 
\end{align*}
Notice that this supremum need \emph{not} be attained for every function $f$ and at every point $x \in \R,$ but this shall not be a problem for us in the most diverse cases, as we will see throughout the text.
This way, we may look at this operator as a particular case of the wider class of \emph{nontangential} maximal operators 

$$ M^{\alpha}f(x) = \sup_{|x-y| \le \alpha t} \frac{1}{2t} \int_{y-t}^{y+t} |f(s)| \mmd s.$$
Indeed, from this new definition, we get directly that 
\[
\begin{cases} 
M^{\alpha}f = Mf, & \text{ if } \alpha = 0,\cr
M^{\alpha}f = \tilde{M}f, & \text{ if } \alpha =1. \cr
\end{cases} 
\]

As in the uncentered case, we can still define `truncated' versions of these operators, by imposing that $t \le R$. These operators are far from being a novelty: several references consider those all around mathematics, among those the classical \cite[Chapter~2]{stein}, and the more recent, yet related to our work, 
\cite{emanumatren}. An easy argument (see Section \ref{commrem} below) proves that, if $\alpha < \beta$, then 
\[
\V(M^{\beta}f) \le \V(M^{\alpha}f).
\]
This implies already, by the main Theorem in \cite{kurka}, that there exists a constant $A \ge 0$ such that $\V(M^{\alpha}f) \le A\V(f),$ for all $\alpha > 0.$ In the intention of sharpening this result, our first result reads, then, as follows: 

\begin{theorem}\label{angle} Fix any $f \in BV(\R).$ For every $\alpha \in [\frac{1}{3},+\infty),$ we have that 
\begin{equation}\label{maint}
\V(M^{\alpha}f) \le \V(f).
\end{equation}
There exists an extremizer $f$ for the inequality (\ref{maint}). If $\alpha > \frac{1}{3},$ then any positive extremizer $f$ to inequality (\ref{maint}) satisfies: 
\begin{itemize} 
 \item $\lim_{x \to -\infty} f(x) = \lim_{x \to + \infty} f(x).$ 
 \item There is $x_0$ such that $f$ is non-decreasing on $(-\infty,x_0)$ and non-increasing on $(x_0,+\infty).$ 
\end{itemize}
Conversely, all such functions are extremizers to our problem. Finally, for every $\alpha \ge 0$ and $f \in W^{1,1}(\R),$ $M^{\alpha}f \in W^{1,1}_{loc}(\R).$
\end{theorem}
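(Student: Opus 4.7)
My plan is to adapt the strategy of Aldaz and P\'erez L\'azaro \cite{aldazperezlazaro} from the uncentered to the nontangential setting. Define the detachment set $D = \{x \in \R : M^{\alpha}f(x) > |f(x)|\}$, open by the lower semicontinuity of $M^{\alpha}f$ (the average is jointly continuous in $(y,t)$, so the supremum is lower semicontinuous), and let $C$ be its complement. On $C$, $M^{\alpha}f$ agrees with $|f|$, so its variation there is dominated by that of $f$. The structural fact to establish is that $M^{\alpha}f$ has no strict local maximum on $D$: granted this, on each connected component $(a,b)$ of $D$ the variation of $M^{\alpha}f$ is bounded by its jumps at the endpoints (where $M^{\alpha}f$ matches $|f|$), and summing across components plus the $C$-contribution reproduces the bookkeeping of \cite{aldazperezlazaro} to give $\V(M^{\alpha}f) \le \V(f)$.

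The heart of the argument is therefore the no-local-maximum lemma for $\alpha \ge 1/3$, which I expect to be the principal obstacle. Suppose $x_0 \in D$ and the defining supremum is attained at a ball $B(y_0, t_0)$, $|x_0 - y_0| \le \alpha t_0$; the unattained case will be handled by approximation with near-extremal balls, as in \cite{aldazperezlazaro}. If the admissibility inequality is strict, the same ball remains admissible for $x$ in a neighborhood of $x_0$, precluding a strict local maximum. In the boundary case $y_0 = x_0 - \alpha t_0$, for $x = x_0 + \epsilon > x_0$ one must produce a competing ball $B(y_0 - \eta, t_0 + \delta)$ admissible at $x$ (requiring $\epsilon + \eta \le \alpha \delta$) whose average attains $M^{\alpha}f(x_0)$. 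Combining the optimality of $B(y_0, t_0)$ at $x_0$, which controls the marginal change of the average under dilation and shift, with the detachment $|f(x_0)| < M^{\alpha}f(x_0)$, which controls $|f|$ near $x_0$, the feasibility problem reduces to an algebraic inequality on $(\alpha, \epsilon, \eta, \delta)$ whose solvability boundary is precisely $\alpha = 1/3$. Geometrically, this threshold corresponds to $x_0$ sitting on the boundary of the middle third of the admissible ball, which is the minimal room required to tilt the ball in favor of $x$.

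Existence of extremizers is then immediate by direct computation on $f = \chi_{[-L, L]}$, giving $\V(M^{\alpha}f) = \V(f) = 2$. For the characterization when $\alpha > 1/3$, I would trace the equality case of the variation bookkeeping, using the general lower bound $\V(g) \ge 2\sup g - g(-\infty) - g(+\infty)$ (valid for every nonnegative BV $g$, with equality iff $g$ is unimodal), together with $\sup M^{\alpha}f = \sup f$ (from $M^{\alpha}f \ge f$ at Lebesgue points and $\le \sup f$ always) and $M^{\alpha}f(\pm \infty) = \max(f(-\infty), f(+\infty))$ (from admissible balls extending arbitrarily far). Feeding these into $\V(M^{\alpha}f) = \V(f)$ and enforcing equality componentwise on $D$ forces $f(-\infty) = f(+\infty)$ and unimodality of $f$; the strict inequality $\alpha > 1/3$ rules out degenerate equality cases at the threshold. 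Conversely, a unimodal $f$ with matching limits $a$ satisfies $\V(f) = 2(\sup f - a)$, which saturates the lower bound for $\V(M^{\alpha}f)$, forcing equality via the main inequality. Finally, the assertion $M^{\alpha}f \in W^{1,1}_{loc}(\R)$ for every $\alpha \ge 0$ follows from local absolute continuity of $M^{\alpha}f$ (falling out of the no-local-maximum argument for $\alpha \ge 1/3$, and from the regularity analysis of \cite{kurka} combined with the monotonicity $\V(M^{\beta}f) \le \V(M^{\alpha}f)$ for $\alpha < \beta$ for smaller $\alpha$), together with the local $L^{\infty}$ bound $\|M^{\alpha}f\|_{\infty} \le \|f\|_{\infty}$ coming from $W^{1,1}(\R) \subset L^{\infty}(\R)$ in one dimension.
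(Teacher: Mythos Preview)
Your framework is right and matches the paper: work on the detachment set $E_\alpha = \{M^\alpha f > |f|\}$, prove $M^\alpha f$ has no strict local maximum there, and run the Aldaz--P\'erez L\'azaro bookkeeping. The genuine gap is in the key lemma. Your perturbation scheme (for $x = x_0 + \epsilon$ in the boundary case $y_0 = x_0 - \alpha t_0$, produce an admissible ball $(y_0 - \eta, t_0 + \delta)$ with $\epsilon + \eta \le \alpha\delta$ and average $\ge u(y_0,t_0)$) does not work: the constraint forces $\eta < \alpha\delta$, so the perturbed ball is already admissible at $x_0$, and optimality there gives $u(y_0 - \eta, t_0 + \delta) \le u(y_0,t_0)$ --- the wrong direction. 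The detachment hypothesis $|f(x_0)| < M^\alpha f(x_0)$ is a single-point condition and cannot control $|f|$ nearby for a general $f\in BV$. You never exhibit the promised algebraic inequality with threshold $1/3$. The paper's mechanism is different: having observed that $M^\alpha f$ is constant on $[y-\alpha t, y+\alpha t]$ when $u(y,t)$ realizes the supremum at a strict local maximum, it proves (via a \emph{boundary projection lemma} splitting any admissible interval through $x$ into its two one-sided pieces) that the truncated uncentered operator satisfies $M^1_{\equiv R} f(x) = \sup_{|z-x|+|s-R|\le R} u(z,s)$, a supremum over a rotated \emph{square} rather than the triangle $\{|z-x|\le s\le R\}$. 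This square, based at $y$, sits inside the admissible region $C(y,\alpha,t) = \bigcup_{|z'-y|\le\alpha t}\{(z,s):|z-z'|\le\alpha s\}$ precisely when one can choose $t/2 < R < \alpha t/(1-\alpha)$, i.e.\ when $\alpha > 1/3$. On a small interval around $y$ this forces $M^\alpha f \equiv M^1_{\equiv R} f$, and \cite[Lemma~3.6]{aldazperezlazaro} then gives $M^1_{\equiv R} f = f$ there, producing the attachment. The $1/3$ threshold arises from this square-in-cone inclusion, not from a ball-perturbation feasibility problem.

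Two further issues. Your identity $M^\alpha f(\pm\infty) = \max(f(-\infty),f(+\infty))$ is false for $\alpha<1$: for $f=\chi_{(-\infty,0)}$ one computes $M^\alpha f \equiv (1+\alpha)/2$ on $(0,\infty)$, so $M^\alpha f(+\infty)=(1+\alpha)/2\ne 1$. This undercuts your extremizer characterization; the paper instead argues that equality in the variation inequality forces both $f$ and $M^\alpha f$ to be monotone on each detachment interval, and then propagates that monotonicity to a full half-line through a chain of structural subclaims. Finally, the $W^{1,1}_{loc}$ assertion does not ``fall out'' of the no-local-maximum argument, which only gives bounded variation; the paper supplies a separate Lipschitz estimate on the sets $E_{\alpha,k}=\{x\in E_\alpha:\text{the supremum is realized with }t\ge 1/(2k)\}$ and closes with the Banach--Zarecki criterion.
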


Notice that stating that a function $g \in W^{1,1}_{loc}(\R)$ is the same as asking it to be absolutely continuous. Our ideas to prove this theorem and theorem \ref{lip} 
are heavily inspired by the ones in \cite{aldazperezlazaro}. Our aim will always be to prove that, when $f \in BV(\R),$ then the maximal function $M^{\alpha}f$ is well-behaved on the detachment set
\[
E_{\alpha} = \{x \in \R\colon M^{\alpha}f(x) > f(x)\}.
\]
Namely, we seek to obtain that the maximal function does not have any local maxima in the set where it disconnects from the original function. Such an idea, together with the concept
of the detachment set $E_{\alpha},$ are also far from being new, having already appeared at \cite{aldazperezlazaro, emanuben, emanumatren, Tanaka}, and recently at \cite{luiroradial}. More specific details of this can be found in the next section. \\ 

In general, our main ideas are contained in Lemma \ref{square}, where we prove that the region in the upper half plane that is taken into account for the supremum that defines 
$$M^1_{\equiv R}f = \sup_{x \in I\colon |I| \le 2R} \dashint_I |f(s)| \mmd s,$$
where we define 
$$\dashint_I g(s) \mmd s :=\frac{1}{|I|} \int_I g(s) \mmd s,$$ 
is actually a (rotated) \emph{square}, and not a triangle -- as a first 
glance might impress on someone --, and in the comparison of $M^{\alpha}f$ and $M^1_{\equiv R}$ over a small interval, in order to establish the maximal attachment property. \\ 

We may ask ourselves if, for instance, we could go lower than $1/3$ with this method. Our next result, however, shows that this is the optimal bound for this technique: 

\begin{theorem}\label{counterex} Let $\alpha< \frac{1}{3}$. Then there exists $f \in BV(\R)$ and a point $x_{\alpha} \in \R$ such that $x_{\alpha}$ is a local maximum of $M^{\alpha}f$, but $M^{\alpha}f(x_{\alpha}) > f(x_{\alpha}).$ 
\end{theorem}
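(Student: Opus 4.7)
The approach is to construct an explicit counterexample from two narrow bumps. Take $\epsilon > 0$ small and set
\[
f = \chi_{[-1-\epsilon,\,-1]} + \chi_{[1,\,1+\epsilon]},
\]
which lies in $BV(\R)$, with candidate local maximum $x_\alpha = 0$. The intuition is that any $\alpha$-admissible interval capturing both bumps must be roughly symmetric about $0$, while one-bump captures are strongly penalized by the admissibility constraint when $\alpha$ is small. The interplay produces a plateau of $M^\alpha f$ near $0$ that, for $\alpha < 1/3$, is genuinely a local maximum rather than the left edge of an increasing regime.

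I would begin by reparametrizing admissible intervals $I = [x - u, x + v]$ with $u,v \ge 0$: the condition $|x-y| \le \alpha t$ on midpoint $y$ and half-length $t = (u+v)/2$ becomes
\[
r \le \frac{u}{v} \le \frac{1}{r}, \qquad r := \frac{1-\alpha}{1+\alpha},
\]
and one notes that $\alpha < 1/3$ is equivalent to $r > 1/2$. A short calculation then gives $M^\alpha f(0) = \epsilon/(1+\epsilon)$, attained by the symmetric interval $I_0 = [-(1+\epsilon), 1+\epsilon]$: this minimizes $|I|$ among admissible intervals covering both bumps, and single-bump captures yield a strictly smaller average, by a factor of at most $(1+\alpha)/2 < 1$.

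The crucial step is to show that $M^\alpha f(\delta) < M^\alpha f(0)$ on a punctured neighborhood of the plateau. For $\delta \in [0, \alpha(1+\epsilon)]$ the interval $I_0$ remains admissible at $x = \delta$, so $M^\alpha f(\delta) = \epsilon/(1+\epsilon)$ throughout this range. Once $\delta$ crosses $\alpha(1+\epsilon)$, any admissible two-bump capture is forced by the binding constraint $u/v = 1/r$ to satisfy $|I| \ge (1+r)(1+\epsilon+\delta)$, giving an average of $\epsilon(1+\alpha)/(1+\epsilon+\delta) < \epsilon/(1+\epsilon)$; meanwhile the best one-bump capture produces $\epsilon(1+\alpha)/(2(1+\epsilon-\delta))$, which exceeds $\epsilon/(1+\epsilon)$ only once $\delta > (1-\alpha)(1+\epsilon)/2$. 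The hypothesis $\alpha < 1/3$ enters exactly here: it is equivalent to $(1-\alpha)(1+\epsilon)/2 > \alpha(1+\epsilon)$, so there is a genuine window $\delta \in (\alpha(1+\epsilon),\,(1-\alpha)(1+\epsilon)/2)$ on which $M^\alpha f(\delta) < \epsilon/(1+\epsilon)$. By symmetry of $f$ the same behaviour occurs to the left of $0$, whence $x_\alpha = 0$ is a local maximum of $M^\alpha f$ with $M^\alpha f(0) = \epsilon/(1+\epsilon) > 0 = f(0)$.

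The main obstacle is bookkeeping rather than any single hard estimate: one has to verify that the two canonical configurations (full two-bump capture and full one-bump capture) really dominate every partial-capture alternative, and that the admissibility constraint yields the claimed optimal $(u,v)$ in each regime. Both comparisons reduce to elementary one-variable optimization in $(u,v)$, so the analysis is tedious but entirely manageable given the explicit structure of $f$.
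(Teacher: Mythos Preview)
Your construction is correct and uses the same underlying geometric idea as the paper's proof: two narrow, well-separated bumps force the $\alpha$-nontangential maximal function to have a strict local maximum at the midpoint precisely when $\alpha<\tfrac13$, since the ``see both bumps'' average beats any ``see one bump'' average there, but the admissibility constraint kills the two-bump option just off the plateau before the one-bump option can recover. Your threshold computation $(1-\alpha)(1+\epsilon)/2 > \alpha(1+\epsilon)\iff \alpha<\tfrac13$ is exactly the right place where the hypothesis enters.

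The paper's route differs in execution: it first works with the \emph{measure} $\mu=\delta_0+\delta_1$, for which the values $M^\alpha\mu(\tfrac13)=M^\alpha\mu(\tfrac23)=\tfrac{3(1+\alpha)}{2}<2\le M^\alpha\mu(\tfrac12)$ fall out of a one-line computation, and then passes to the functions $f_n=n(\chi_{[0,1/n]}+\chi_{[1-1/n,1]})$ via a convergence argument $M^\alpha f_n\to M^\alpha\mu$ pointwise on $(0,1)$. Your approach trades that limiting step for a direct optimisation over admissible intervals at every $\delta$, which is more hands-on but self-contained and avoids any discussion of maximal functions of measures. The paper's detour through $\mu$ makes the critical inequality $2>\tfrac{3(1+\alpha)}{2}$ completely transparent, whereas in your argument the same inequality is hidden inside the comparison of the one-bump and two-bump regimes; conversely, your version never leaves $BV(\R)$ and gives an explicit $f$ (rather than ``$f_N$ for $N$ large enough''). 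Either way the bookkeeping you flag---ruling out partial captures---is indeed routine monotonicity in $(u,v)$.
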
 

We can inquire ourselves whether we can generalize the results from Aldaz and P\'erez L\'azaro in yet another direction, though. With this in mind, we notice that Kurka \cite{kurka} mentions in his paper that his techniques allow one to 
prove that some Lipschitz truncations of the center maximal function, that is, maximal functions of the form 
\[
M^0_{N}f(x) = \sup_{t \le N(x)} \frac{1}{2t} \int_{x-t}^{x+t} |f(s)| \mmd s,
\]
are bounded from $BV(\R)$ to $BV(\R)$ -- with some possibly big constant -- if $\text{Lip}(N) \le 1.$ Inspired by it, we define the \emph{$N-$truncated uncentered maximal function} as 
$$ M^1_{N}f(x) = \sup_{|x-y|\le t \le N(x)} \dashint_{y-t}^{y+t} |f(s)| \mmd s. $$ 

The next result deals then with an analogous of Kurka's result in the case of the centered maximal functions. In fact, we achieve even more in this 
case, as we have also the explicit sharp constants for that. In details, the result reads as follows: 

\begin{theorem}\label{lip} Let $N: \R \to \R_{+}$ be a measurable function. If $\text{Lip}(N) \le \frac{1}{2},$ we have that, for all $f \in BV(\R),$ 
\[ 
\V(M^1_Nf) \le \V(f).
\]
Moreover, the result is sharp, in the sense that there are non-constant functions $f$ such that $\V(f) = \V(M^1_Nf).$ 
\end{theorem}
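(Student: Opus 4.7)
The plan is to adapt the strategy of \cite{aldazperezlazaro} and Theorem \ref{angle}: first reduce to $f \ge 0$ (using $M^1_N f = M^1_N |f|$ and $\V(|f|) \le \V(f)$), verify lower semicontinuity of $M^1_N f$, and then prove the \emph{maximal attachment property} that $M^1_N f$ admits no strict local maxima on the detachment set
\[
E_N := \{x \in \R \colon M^1_N f(x) > f(x)\}.
\]
Once this is in place, on each connected component of $E_N$ the function $M^1_N f$ has at most one interior local minimum, so its variation there is bounded by the variation of $f$ on that component; adding the contribution from $\R \setminus E_N$ (where $M^1_N f = f$) gives the absolute continuity of $M^1_N f$ and the inequality $\V(M^1_N f) \le \V(f)$, as in \cite{aldazperezlazaro}.

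For $x_0 \in E_N$, compactness of the admissible parameter set $\{(y, t) \colon |x_0 - y| \le t \le N(x_0)\}$ produces an extremal interval $I_0 = (y_0 - t_0, y_0 + t_0)$; we may assume $y_0 \le x_0 \le y_0 + t_0$. The convex-combination splitting
\[
\dashint_{I_0} f = \frac{x_0 - (y_0 - t_0)}{2 t_0}\, \dashint_{y_0 - t_0}^{x_0} f + \frac{(y_0 + t_0) - x_0}{2 t_0}\, \dashint_{x_0}^{y_0 + t_0} f
\]
implies that at least one half-average, say the left one $A$, satisfies $A \ge \dashint_{I_0} f$. The key candidate at $x = x_0 - h$ (for $h > 0$ small) is $J_h := (y_0 - t_0, x_0 - h)$: writing $L := (x_0 - y_0) + t_0$ for the length of $(y_0 - t_0, x_0)$, the half-length of $J_h$ is $(L - h)/2 \le t_0 - h/2$, which by $\text{Lip}(N) \le \frac{1}{2}$ satisfies $t_0 - h/2 \le N(x_0) - h/2 \le N(x_0 - h)$, so $J_h$ is admissible at $x$. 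A direct computation gives
\[
\dashint_{J_h} f - A = \frac{h\,(A - \dashint_{x_0 - h}^{x_0} f)}{L - h},
\]
and combining the local-maximum assumption on $x_0$ with $M^1_N f \ge f$ forces $\limsup_{h \to 0^+} \dashint_{x_0 - h}^{x_0} f \le A$. This yields $M^1_N f(x_0 - h) \ge \dashint_{J_h} f \ge A \ge M^1_N f(x_0)$, with strict inequality for some arbitrarily small $h > 0$ unless $f$ is a.e.\ constant on a left-neighborhood of $x_0$ --- which then contradicts either the local-maximum assumption or the openness of $E_N$ around $x_0$.

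The main technical obstacle is the borderline configuration $x_0 = y_0 + t_0$ with $t_0 = N(x_0)$, in which the candidate $J_h$ saturates the truncation at $x = x_0 - h$ with no slack: the admissibility inequality $t_0 - h/2 \le N(x_0 - h)$ is tight in the worst Lipschitz profile, and this is precisely where the value $\frac{1}{2}$ of the Lipschitz constant is used sharply. The equality case in the splitting requires iterating the argument on the smaller extremal interval $(y_0 - t_0, x_0)$ until strict inequality emerges or $f$ collapses to a constant on $I_0$ (which would contradict $x_0 \in E_N$). For sharpness, the family $f = \chi_{[-R, R]}$ works: a direct calculation shows that for any $R > 0$ and any $N$ with $\text{Lip}(N) \le \frac{1}{2}$, $M^1_N f$ equals $1$ on $[-R, R]$ and is monotone on each of $(-\infty, -R]$ and $[R, +\infty)$, giving $\V(M^1_N f) = 2 = \V(f)$.
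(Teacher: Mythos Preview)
Your approach is essentially correct and takes a genuinely different route from the paper's. The paper proves the attachment property for $M^1_N$ by locally comparing it with the constant-truncation operator $M^1_{\equiv R}$: using Lemma~\ref{square} it fits the square region $\{|z-x|+|s-R|\le R\}$ inside the admissible cone for $M^1_N$ (this is where $\text{Lip}(N)<\tfrac12$ enters, and the endpoint $\text{Lip}(N)=\tfrac12$ is obtained afterwards by approximating $N$), concludes that $M^1_{\equiv R}f$ is constant on a small interval, and then invokes \cite[Lemma~3.6]{aldazperezlazaro} to deduce $M^1_{\equiv R}f=f$ there. Your shrinking-interval argument with $J_h=(y_0-t_0,\,x_0-h)$ and the admissibility check $(L-h)/2\le t_0-h/2\le N(x_0-h)$ handles $\text{Lip}(N)\le\tfrac12$ in one stroke and avoids both the square lemma and the black-box appeal to \cite{aldazperezlazaro}; what you lose is the clean reduction, what you gain is a self-contained and more elementary proof.

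Two points need tightening. First, the ``$f$ locally constant'' case is not a contradiction with openness of $E_N$ and needs no iteration: once $\dashint_{x_0-h}^{x_0}f=A$ for all small $h$, under $NORM(1)$ you get $f=A=M^1_Nf$ pointwise on $(x_0-\delta,x_0)$, which \emph{is} the attachment of Property~1 (an interval of constancy meeting $f$), and that is all the variation argument requires; you never need $E_N$ to be open. Second, for sharpness your claim that $M^1_N\chi_{[-R,R]}$ is monotone on each half-line ``by direct calculation'' is not straightforward for general $N$; argue instead that $M^1_Nf\equiv 1$ on $[-R,R]$ and $M^1_Nf(x)\to 0$ as $|x|\to\infty$ (this follows since either $N(x)$ stays bounded, forcing the admissible intervals eventually to miss $[-R,R]$, or $N(x)\to\infty$ along a subsequence, forcing the averages to $0$), which gives $\V(M^1_Nf)\ge 2=\V(f)$ directly, and equality then follows from the inequality you just proved.
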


Again, we are also going to use a careful maxima in this case. Actually, we are going to do it both in theorems \ref{angle} and \ref{lip} for the non-endpoint cases $\alpha > \frac{1}{3}$ 
and $\text{Lip}(N) < \frac{1}{2},$ while the endpoints are treated with an easy limiting argument. \\ 

In the same way, one may ask whether we can ask our Lipschitz constant to be greater than $\frac{1}{2}$ in this result. Regarding this question, we prove in section 4.3 the following negative answer: 

\begin{theorem}\label{lipcont} Let $c > \frac{1}{2}$ and 
\[ 
f(x) = \begin{cases} 
        1, & \text{ if } x \in (-1,0);\cr
        0, & otherwise. \cr
       \end{cases}
\]
Then there is a function $N:\R \to \R_{\ge 0}$ such that $\text{Lip}(N) = c$ and 
\[ 
\V(M^1_Nf) = + \infty.
\]
\end{theorem}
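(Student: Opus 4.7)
The plan is to compute $M^1_N f$ explicitly when $f = \chi_{(-1,0)}$ and then construct a sawtooth $N$ that makes the maximal function oscillate with slowly decaying amplitude. First, for any $x > 0$ and any admissible interval $(a',b) \ni x$ with $b - a' \le 2N(x)$, a case analysis on whether $a' \ge 0$, $a' \in (-1,0)$, or $a' \le -1$ shows the supremum is attained along intervals with right endpoint exactly $x$, yielding
\[
M^1_N f(x) = \begin{cases} 0, & N(x) \le x/2,\cr 1 - \dfrac{x}{2N(x)}, & \dfrac{x}{2} \le N(x) \le \dfrac{x+1}{2},\cr \dfrac{1}{x+1}, & N(x) \ge \dfrac{x+1}{2}, \end{cases}
\]
with the branches matching on the overlaps. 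The upper envelope $y = (x+1)/2$ and lower envelope $y = x/2$ both have slope $1/2$, so only a $c$-Lipschitz function with $c > 1/2$ can cross between them.

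Next, I would build $N$ as a piecewise linear zigzag with slopes $\pm c$ bouncing between the two envelopes. Set $x_1 = 1$ and recursively let $y_n$ be the intersection of the line of slope $-c$ through $(x_n,(x_n+1)/2)$ with $y = x/2$, and $x_{n+1}$ the intersection of the line of slope $+c$ through $(y_n, y_n/2)$ with $y = (x+1)/2$. A direct computation gives
\[
y_n - x_n = \frac{1}{2c + 1}, \qquad x_{n+1} - y_n = \frac{1}{2c - 1},
\]
both finite precisely because $c > 1/2$. Extending $N \equiv (x_1+1)/2$ on $(-\infty, x_1]$ produces a positive function with $\text{Lip}(N) = c$, and on each zigzag segment one checks $x/2 \le N(x) \le (x+1)/2$ with equality only at the endpoints.

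Finally, by the formula, $M^1_N f(x_n) = 1/(x_n + 1)$ while $M^1_N f(y_n) = 0$. A sign analysis of $\frac{d}{dx}\bigl[1 - x/(2N(x))\bigr] = (xN'(x) - N(x))/(2N(x)^2)$, using $xN' - N = -cx - N < 0$ on descending segments and $xN' - N = y_n(c - 1/2) > 0$ on ascending ones, shows $M^1_N f$ is strictly monotone on each $[x_n, y_n]$ and each $[y_n, x_{n+1}]$. Hence
\[
\V(M^1_N f) \ge \sum_{n=1}^{\infty}\left[\frac{1}{x_n+1} + \frac{1}{x_{n+1}+1}\right] = +\infty,
\]
since $x_n$ grows linearly in $n$. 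The main delicate point is the case analysis producing the three-branch formula — especially verifying that the extremizing admissible interval always has right endpoint exactly at $x$ — after which the zigzag construction and the harmonic-series divergence are routine.
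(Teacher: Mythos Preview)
Your proof is correct and follows essentially the same approach as the paper: both construct a piecewise-linear sawtooth $N$ with slopes $\pm c$ that bounces between the envelopes $y=x/2$ and $y=(x+1)/2$, so that $M^1_Nf$ vanishes at the low contacts and equals $1/(x+1)$ at the high contacts, producing a divergent harmonic-type sum for the variation. Your version is in fact more detailed than the paper's --- you derive the full three-branch formula for $M^1_Nf$ and verify monotonicity on each segment --- whereas the paper simply evaluates $M^1_Nf$ at the two sequences of contact points and bounds the variation from below by the resulting sum; the extra monotonicity analysis is not needed for the conclusion but does no harm.
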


\textbf{Acknowledgements.} The author would like to thank Christoph Thiele, for the remarks that led him to the full range $\alpha \ge \frac{1}{3}$ at Theorem \ref{angle}, as well as to the proof that this is sharp for this technique, and Olli Saari, for enlightening discussions 
about the counterexamples in the proof of Theorem \ref{lipcont} and their construction. He would also like to thank Emanuel Carneiro and Mateus Sousa for helpful comments and discussions, 
many of which took place during the author's visit to the International Centre for Theoretical Physics in Trieste, to which the author is grateful for its hospitality, and Diogo Oliveira e Silva, for his thorough inspection and numerous comments on the preliminary versions of this paper. The author would like to thank also 
the anonymous refferees, whose corrections and ideas, among which the use of a new normalization, have simplified and cleaned a lot this manuscript. Finally, the author acknowledges financial
support from the Hausdorff Center of Mathematics and the DAAD.

\section{Basic definitions and properties} Throughout the paper, $I$ and $J$ will usually denote open intervals, and $l(I),l(J),r(I),r(J)$ their left and right endpoints, respectively. 
We also denote, for $f \in BV(\R),$ the \emph{one-sided limits} $f(a+)$ and $f(a-)$ to be 
\[ 
f(a+) = \lim_{x \searrow a} f(x) \text{ and } f(a-) = \lim_{x \nearrow a} f(x).
\]
We also define, for a general function $N:\R \to \R,$ its \emph{Lipschitz constant} as 
\[
\text{Lip}(N) := \sup_{x\ne y \in \R} \frac{|N(x) - N(y)|}{|x-y|}.
\]
By considering the arguments and techniques contained in the lemmata from \cite{aldazperezlazaro}, we may consider sometimes a function in $BV(\R)$ endowed with the normalization $f(x) = \limsup_{y \to x}f(y), \,\forall x \in \R.$ At some other times, however, we might need to work with a normalization 
a little more friendly to the maximal functions involved. Let, then, for a fixed $\alpha \in (0,1],$ 

\[
\mathcal{N}_{\alpha}{f}(x) = \limsup_{(y,t) \to (x,0): |y-x|\le \alpha t} \frac{1}{2t} \int_{y-t}^{y+t} |f(s)| \mmd s.
\]
This coincides, by definition, with $f$ almost everywhere, as bounded variation functions are continuous almost everywhere. Moreover, this normalization can be stated, in a pointwise context, as 

\[ 
\mathcal{N}_{\alpha}f(x) = \frac{(1+\alpha)\limsup_{y \to x} f(y) + (1-\alpha) \liminf_{y \to x} f(y)}{2}.
\]
With this normalization, we see that, for any $f \in BV(\R),$ 
\[
M^{\alpha}f(x) \ge \mathcal{N}_{\alpha}f(x), \,\, \text{for each }x \in \R.
\]
This normalization, however, is \emph{not} friendly to boundary points: the sets $\{M^{\alpha}f > f\}$ might not be open when we adopt it, as the example of $f = \chi_{(0,\frac{1-\alpha}{4}]} + \frac{1}{2} \chi_{(\frac{1-\alpha}{4},\frac{1-\alpha}{2}]} + \chi_{(\frac{1-\alpha}{2},1]}$ endowed with $\mathcal{N}_{\alpha}f$ shows. This function has the property that $M^{\alpha}f\left(\frac{1-\alpha}{2}\right) > \mathcal{N}_{\alpha}f \left(\frac{1-\alpha}{2}\right),$ but $M^{\alpha}f = f$ at $\left(\frac{1-\alpha}{2},1\right).$\\ 

Consider then $\mathcal{N}_{\alpha}f$, and notice that the situations as in the example above can only happen if $\mathcal{N}_{\alpha}f$ is \emph{discontinuous} at a point $x$. We then let 

\begin{equation}
\tilde{\mathcal{N}}_{\alpha}f(x) = \begin{cases}
				\mathcal{N}_{\alpha}f(x), & \text{ if } M^{\alpha}f(x) > \limsup_{y \to x}f(x);\cr
				M^{\alpha}f(x), & \text{ if } \limsup_{y \to x}f(x) \ge M^{\alpha}f(x) \ge \mathcal{N}_{\alpha}f(x). \cr
				\end{cases}
\end{equation}

Of course, we are only changing the points in which $\liminf_{y \to x} f(y) < \mathcal{N}_{\alpha}f < \limsup_{y \to x} f(y),$ and thus this normalization does not change the variation, i. e., $\V(\tilde{\mathcal{N}}_{\alpha}f) = \V(f).$ 
Again, by adapting the lemmata in \cite{aldazperezlazaro} to this context, one checks that we may assume, without loss of generality, that our function has this normalization. We will, for shortness, say we are using $NORM(\alpha)$ whenever we use this normalization. Notice that $NORM(1)$ is the normalization 
used by Aldaz and P\'erez L\'azaro. \\ 

We mention also a couple of words about the maxima analysis performed throughout the paper. In the paper \cite{aldazperezlazaro}, the authors developed an ingenious 
way to prove the sharp bounded variation result for the uncentered maximal function. Namely, they proved that, whenever $f \in BV(\R)$, then the maximal function $\tilde{M}f$ is actually continuous, and the (open) set 
\[ 
E = \{ \tilde{M}f > f\} = \cup_j I_j
\]
satisfies that, in each of the intervals $I_j$, $\tilde{M}f$ has no local maxima. More specifically, they observed that every local maximum $x_0$ of $\tilde{M}f$ satisfies that $\tilde{M}f(x_0) = f(x_0).$ In our case, we are going to need the general version of this property, 
as the statement with local maxima of $M^{\alpha}f(x_0)$ may not hold. It is much more of an informal principle than a property itself, but we shall state it nonetheless, for the sake of stressing its impact on our methods. 

\begin{proper} We say that an operator $\mathcal{O}$ defined on the class of bounded variation functions has a good attachment at local maxima if, for every $f \in BV(\R)$ and local maximum $x_0$ of $\mathcal{O}f$ over an interval $(a,b)$, with $\mathcal{O}f(x_0) > \max(\mathcal{O}f(a),\mathcal{O}f(b)),$
then either $\mathcal{O}f(x_0) = |f(x_0)|$ or there exists an interval $(a,b) \supset $ such that $\mathcal{O}f$ is constant on $I$ and there is $y \in I$ such that $\mathcal{O}f(y) = |f(y)|.$ 
\end{proper} 

The intuition behind this principle is that, for such operators, one usually has that $\V(\mathcal{O}f) \le \V(f)$, as skimming through the proofs in \cite{aldazperezlazaro} suggests. This is, as one should expect, the main tool to prove Theorems \ref{angle} and \ref{lip}. 

%Finally, we define, for $f \in BV_{loc}(\R),$ the \emph{variation of a function over an interval $I$} as 
%\[ 
%\V_{I}(f) = \sup_{l(I) \le x_1<\cdots < x_N \le r(I)} \sum_{i=1}^{N-1} |\max({f(x_{i+1}+),f(x_{i+1}-)}) - \max({f(x_{i}+),f(x_{i}-)})|.
%\]
%This definition might seem a little artificial in the beginning, but it is going to be the precise one for our sharp results. Moreover, we see directly from the definition that, if $I=\R,$ then $\V_{\R} = \V.$ 
%\begin{figure}
%\centering
%\begin{tikzpicture}[scale=2.0]
%\draw[-|,semithick] (-3,0) -- (-1,0);
%\draw[|-|,semithick] (-1,0) -- (1,0);
%\draw[|-,semithick] (1,0) -- (3,0); 
%\draw[-,thick] (-1,1) -- (1,1);
%\draw[black, semithick, domain=-3:-1] plot(\x,{1/(1-\x)});
%\draw[black, semithick, domain=1:3] plot(\x,{1/(1+\x)});
%\end{tikzpicture}
%\caption{For the function $f=\chi_{(-1,1)},$ we have that the variation $\V_{(-\infty,-1)} (M^{0}f) = \V_{(-\infty,-1)}(f),$ whereas, if we defined the variation of an interval as we did for the real line, we 
%would not obtain this equality.}
%\end{figure}

\section{Proof of Theorems \ref{angle} and \ref{counterex}} In what follows, let $f \in BV(\R)$ have either $NORM(1)$ or $NORM(\alpha),$ where the specified normalization used will be stated in each context.

\subsection{Analysis of maxima for $M^{\alpha}$, $\alpha > \frac{1}{3}$} Here, we prove some major facts that will facilitate our work. Let then $[a,b]$ be an interval, and suppose that $M^{\alpha}f$ has a \emph{strict} local maximum at $x_0 \in (a,b).$ That is, 
we suppose that $M^{\alpha}f(x_0)$ is maximal over $[a,b],$ with $M^{\alpha}f(x_0) > \max\{ M^{\alpha}f(a),M^{\alpha}f(b)\}.$ Suppose also that
$M^{\alpha}f(x_0) = u(y,t), \text{ for some } (y,t) \in \{(z,s); |z-x_0|\le \alpha s\},$ where we define the function $u:\R\times \R_{+} \to \R_+$ as 
\[ 
u(y,t) = \frac{1}{2t} \int_{y-t}^{y+t} |f(s)|\,\mmd s. 
\]
Such an assumption is possible, as we would otherwise have that either 
\begin{itemize} 
\item a sequence $(y,t) \to (x_0,0)$ such that $\dashint_{y-t}^{y+t} |f(s)| \mmd s \to M^{\alpha}f(x_0),$ which implies $|f(x_0)| = M^{\alpha}f(x_0)$ by the normalization;
\item a sequence $(y,t)$ with $t \to \infty$ such that $\dashint_{y-t}^{y+t} |f(s)| \mmd s \to M^{\alpha}f(x_0),$ which implies that either $M^{\alpha}f(a)$ or $M^{\alpha}f(b)$ is bigger than or equal to $M^{\alpha}f(x_0),$ a contradiction. 
\end{itemize}

As $M^{\alpha}f(x_0) = u(y,t),$ we have that $M^{\alpha}f(x_0) = M^{\alpha}f(y).$ Moreover, we claim that 
\[
[y-\alpha t, y+ \alpha t] \subset (a,b).
\]
If this did not hold, then $[y-\alpha t, y+ \alpha t] \ni \text{ either } a \text{ or } b.$ Let us suppose, without loss of generality, that $a \in [y-\alpha t, y+ \alpha t].$ But then 
\[
a \ge y - \alpha t \Rightarrow |a-y| \le \alpha t \Rightarrow M^{\alpha}f(a) \ge M^{\alpha}f(y) \ge M^{\alpha}f(x_0),
\]
a contradiction to our assumption of strictness of the maximum. This implies that, as for any $z \in [y-\alpha t, y+ \alpha t] \Rightarrow |z-y|\le \alpha t,$ the maximal function $M^{\alpha}f$ is \emph{constant} over the interval $[y-\alpha t, y+ \alpha t].$ Moreover, we have that the supremum of 
\[u(z,s), \text{ for } (z,s) \in \cup_{z' \in [y-\alpha t, y+ \alpha t]} \{(z'',s''): |z''-z'| \le \alpha s''\} =: C(y,\alpha,t),
\]
is attained for $(z,s) = (y,t).$ \\ 

By standar techniques, we shall assume $f \ge 0$ from now on. Our next step is then to find a subinterval $I$ of $[y-\alpha t, y+ \alpha t]$ and a $R = R(y,\alpha,t)$ such that, over this interval $I$, it holds that 
\[ 
M^1_{\equiv R} f \equiv M^{\alpha}f. 
\]
Here, $M^1_{\equiv R}$ stands for the operator $\sup_{x \in I, |I| \le 2R} \dashint_I |f(s)| \mmd s.$ 
For that, we need to investigate a few properties of the restricted maximal function $M^1_{\equiv R} f.$ This is done via the following: 

\begin{lemm}[Boundary Projection Lemma]\label{BPL} Let $(y,t) \in \R\times \R_{+}$. Let us denote $$ \frac{1}{2t} \int_{y-t}^{y+t} f(s) \mmd s = u(y,t).$$ If $(y,t) \in \{(z,s); 0<|z-x|\le s \}$, then 

$$ u(y,t) \le \max\left\{u\left(\frac{x+y-t}{2},\frac{x-y+t}{2}\right), u\left(\frac{x+y+t}{2},\frac{y-x+t}{2}\right)\right\}.$$
 
\end{lemm}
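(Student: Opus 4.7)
The plan is to recognize the two candidate pairs on the right as parametrizing the two subintervals obtained by splitting $(y-t, y+t)$ at the point $x$. First, I would verify by direct computation that $(y_1, t_1) := \left(\frac{x+y-t}{2}, \frac{x-y+t}{2}\right)$ gives $(y_1 - t_1, y_1 + t_1) = (y-t, x)$, and that $(y_2, t_2) := \left(\frac{x+y+t}{2}, \frac{y-x+t}{2}\right)$ gives $(y_2 - t_2, y_2 + t_2) = (x, y+t)$. The hypothesis $|y-x| \le t$ places $x \in [y-t, y+t]$, so both $t_1, t_2 \ge 0$ and $t_1 + t_2 = t$; thus the pairs correspond to honest subintervals partitioning $(y-t, y+t)$ at $x$.

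Once this identification is made, the inequality reduces to the elementary fact that an average over a union of disjoint intervals is a convex combination of the averages over the pieces. Explicitly, I would write
\[
u(y,t) = \frac{1}{2t}\left(\int_{y-t}^{x} f(s)\,\mmd s + \int_{x}^{y+t} f(s)\,\mmd s\right) = \frac{t_1}{t}\, u(y_1, t_1) + \frac{t_2}{t}\, u(y_2, t_2),
\]
and observe that this convex combination is bounded above by $\max\{u(y_1,t_1), u(y_2,t_2)\}$, yielding the claim. The only subtlety --- and there is really no deep obstacle here --- is the degenerate case $|y-x| = t$, where one of $t_1, t_2$ vanishes; in that situation $x$ is an endpoint of the original interval, the non-degenerate pair coincides with $(y,t)$ itself, and the assertion is trivial. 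The strict condition $0 < |y-x|$ in the hypothesis plays no role in the argument itself, and presumably reflects the intended application in the analysis-of-maxima step above.
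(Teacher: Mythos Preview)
Your proposal is correct and follows essentially the same route as the paper: split the interval $(y-t,y+t)$ at $x$, recognize the two pieces as the intervals parametrized by $(y_1,t_1)$ and $(y_2,t_2)$, and bound the resulting convex combination by the maximum. Your handling of the degenerate case $|y-x|=t$ and your observation about the irrelevance of the strict inequality $0<|y-x|$ are both accurate.
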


\begin{proof}
The  proof is simple: in case $|x-y|=t,$ then the inequality is trivial, so we assume $|x-y|<t.$ We then just have to write 

\begin{align*}
u(y,t) &= \frac{1}{2t} \int_{y-t}^{y+t} f(s) \mmd s = \frac{1}{2t} \int_{y-t}^x f(s) \mmd s + \frac{1}{2t} \int_x^{y+t} f(s) \mmd s \cr 
 & = \frac{x-y+t}{2t} \frac{1}{x-y+t} \int_{y-t}^x f(s) \mmd s \cr
 & + \frac{y-x+t}{2t} \frac{1}{y-x+t} \int_x^{y+t} f(s) \mmd s \cr 
 & = \frac{x-y+t}{2t} u\left(\frac{x+y-t}{2},\frac{x-y+t}{2}\right) \cr
 & + \frac{y-x+t}{2t}  u\left(\frac{x+y+t}{2},\frac{y-x+t}{2}\right) \cr 
 & \le \max \left\{ u\left(\frac{x+y-t}{2},\frac{x-y+t}{2}\right),  u\left(\frac{x+y+t}{2},\frac{y-x+t}{2}\right)\right\}.
\end{align*}
\end{proof}

\begin{figure}
\centering
\begin{tikzpicture}[scale=0.763456]
        \draw[-,semithick] (0,-0.5) -- (-4,4.5);
        \draw[-|,semithick] (-3,-0.5) -- (0,-0.5);
	\draw[|-|,semithick] (0,-0.5) -- (2,-0.5);
	\draw[-|,semithick] (2,-0.5) -- (4,-0.5);
	\draw[|-,semithick] (4,-0.5) -- (7,-0.5);
	\draw[-,semithick] (4,-0.5) -- (8,4.5);
	\draw (2,1.1) node[circle,fill,inner sep=1pt]{} ;
	\fill [gray, opacity=0.15] (0,-0.5) -- (-4,4.5) -- (8,4.5) --(4,-0.5) -- cycle;
	\draw (2,0.6) node {$ (y,t)$};
	\draw (0,-1) node {$y-\alpha t$};
	\draw (2,-1) node {$y$};
	\draw (4,-1) node {$y+\alpha t$};
\end{tikzpicture}
\caption{The region $C(y,\alpha,t).$}
\end{figure}
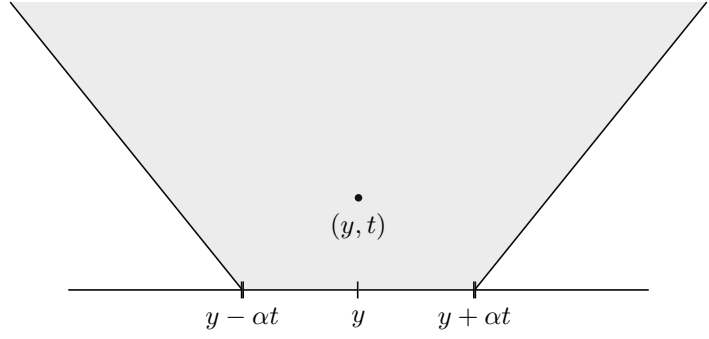
\vspace{4mm}

Let $M_{r,A}f(x) = \sup_{0 \le t \le 2A}\frac{1}{t} \int_{x}^{x+t} |f(s)| \, \mmd  s,$ and define $M_{l,A}f$ in a similar way, there the subindexes $``r"$ and $``l"$ represent, respectively, ``right" and ``left". These operators
are present out of the context of sharp regularity estimates for maximal functions, just like in \cite{riesz}. In the realm of regularity of maximal function, though, the first to introduce this notion was
Tanaka \cite{Tanaka}. As a corollary, we may obtain the following: 

\begin{corol}\label{control} For every $f \in L^1_{loc}(\R),$ it holds that
\[
\sup_{|z-x| + |t-R| \le R} u(z,t) \le \max\{M_{r,R}f(x),M_{l,R}f(x)\}.
\]
\end{corol}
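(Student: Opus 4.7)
The plan is to reduce the corollary to a direct application of the Boundary Projection Lemma (Lemma \ref{BPL}). I would start by understanding the geometry of the parameter region $D := \{(z,t) : |z-x| + |t-R| \le R\}$: this is a rotated square with vertices $(x,0)$, $(x \pm R, R)$ and $(x,2R)$. A quick case split on whether $t \le R$ or $t \ge R$ shows that every $(z,t) \in D$ satisfies the two inequalities
\[
|z-x| \le t, \qquad |z-x| + t \le 2R.
\]
The first says $x \in [z-t, z+t]$, which is exactly the hypothesis needed to invoke Lemma \ref{BPL}; the second will control the length of the subintervals produced by that lemma.

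Given any $(z,t) \in D$, Lemma \ref{BPL} then yields
\[
u(z,t) \le \max\bigl\{u(z_1,t_1),\, u(z_2,t_2)\bigr\},
\]
with $(z_1,t_1) = \bigl(\tfrac{x+z-t}{2}, \tfrac{x-z+t}{2}\bigr)$ and $(z_2,t_2) = \bigl(\tfrac{x+z+t}{2}, \tfrac{z-x+t}{2}\bigr)$. The defining identities $z_1 + t_1 = x$ and $z_2 - t_2 = x$ mean that these averages are taken over intervals abutting $x$ from the left and from the right, namely
\[
u(z_1,t_1) = \frac{1}{2t_1}\int_{x-2t_1}^{x} |f(s)|\,\mmd s, \qquad u(z_2,t_2) = \frac{1}{2t_2}\int_{x}^{x+2t_2} |f(s)|\,\mmd s,
\]
which are exactly the quantities appearing in the definitions of the truncated one-sided maximal functions $M_{l,R}f(x)$ and $M_{r,R}f(x)$, provided the lengths $2t_1, 2t_2$ stay $\le 2R$.

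This is where the second geometric inequality comes in: since $\max(t_1,t_2) = \tfrac{|z-x|+t}{2} \le R$, both $2t_1$ and $2t_2$ fall within the allowed truncation, and hence
\[
u(z_1,t_1) \le M_{l,R}f(x), \qquad u(z_2,t_2) \le M_{r,R}f(x).
\]
Combining this with the previous display and taking the supremum over $(z,t) \in D$ gives the claimed inequality. I do not expect any real obstacle: the geometric verification reduces to two elementary inequalities, and the degenerate configurations (e.g.\ $z = x$, where one simply sets $t_1 = t_2 = t/2$, or $|z-x| = t$, where BPL is trivial) fit into the same formula. The corollary is essentially a clean repackaging of Lemma \ref{BPL} once the correct shape of the parameter region has been identified.
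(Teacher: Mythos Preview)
Your proof is correct and is precisely the argument the paper has in mind: the corollary is stated without proof, as an immediate consequence of Lemma~\ref{BPL}, and your write-up simply spells out that deduction --- apply the Boundary Projection Lemma to each $(z,t)$ in the square, observe that the two projected averages are one-sided at $x$, and use $|z-x|+t\le 2R$ to check they respect the truncation. There is nothing to add.
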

From this last corollary, we are able to establish the following important -- and, as far as the author knows, new -- lemma: 

\begin{lemm}\label{square} For every $f \in L^1_{loc}(\R),$ we have also that
$$M^1_{\equiv R} f (x) = \sup_{|z-x| + |t-R| \le R} u(z,t).$$
\end{lemm}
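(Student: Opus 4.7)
The plan is to exploit the geometric picture that the region $S:=\{(z,t):|z-x|+|t-R|\le R\}$ is a square rotated $45^{\circ}$ with vertices at $(x,0)$, $(x+R,R)$, $(x,2R)$, $(x-R,R)$, while the parameter set relevant to $M^1_{\equiv R}f(x)$, namely
\[
T:=\{(z,t):|z-x|\le t\le R\},
\]
is precisely the lower triangle of $S$, with vertices $(x,0)$ and $(x\pm R,R)$. The task reduces to showing $\sup_S u=\sup_T u$.

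The inclusion $T\subset S$ is immediate: for $(z,t)\in T$ one has $|z-x|+|t-R|\le t+(R-t)=R$, giving the easy direction $M^1_{\equiv R}f(x)\le\sup_S u$. For the reverse inequality, the idea is that any point in the upper triangle $S\setminus T=\{(y,t):R<t\le 2R,\,|y-x|\le 2R-t\}$ can be dominated by two points in $T$ via Lemma \ref{BPL}, applied with $x$ itself as the splitting point. Indeed, since $|y-x|\le 2R-t<t$ on the upper triangle, the lemma produces
\[
u(y,t)\le\max\left\{u\bigl(\tfrac{x+y-t}{2},\tfrac{x-y+t}{2}\bigr),\,u\bigl(\tfrac{x+y+t}{2},\tfrac{y-x+t}{2}\bigr)\right\}.
\]
The two new radii equal $\tfrac{t\pm(y-x)}{2}$, each at most $\tfrac{t+(2R-t)}{2}=R$ thanks to the defining inequality $|y-x|\le 2R-t$ of $S$; and each new center sits at distance exactly its new radius from $x$. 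So both replacement pairs land on the slanted sides of $T$. The degenerate case $y=x$ (where the lemma's hypothesis $0<|y-x|$ fails) is handled by splitting $[x-t,x+t]$ directly in half, which yields two intervals of radius $t/2\le R$ centered at $x\pm t/2$, each belonging to $T$.

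Chaining these two observations yields $\sup_S u\le\sup_T u$, which together with the easy direction completes the identity. I don't expect a genuine obstacle here: the only subtlety is the bookkeeping needed to verify that the new radii produced by Lemma \ref{BPL} stay $\le R$, and this is exactly what the square inequality $|y-x|+|t-R|\le R$ is designed to provide. The remainder is elementary geometry.
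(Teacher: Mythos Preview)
Your argument is correct and follows essentially the same idea as the paper's proof: both rely on Lemma~\ref{BPL} to project points of the square $S$ onto the diagonal sides $|z-x|=t$ of the triangle $T$, and both check that the projected radii stay $\le R$ precisely because of the defining inequality of $S$. The only cosmetic difference is that the paper routes through Corollary~\ref{control} and the one-sided maximal operators $M_{r,R},M_{l,R}$ as an intermediate bound, whereas you apply Lemma~\ref{BPL} directly to points of $S\setminus T$ and land immediately in $T$; your version is slightly more self-contained but not a genuinely different approach.
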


\begin{proof}From Corollary \ref{control}, we have that 
\[
M^1_{\equiv R}f(x) := \sup_{|x-y|\le t \le R} u(y,t) \le \sup_{|z-x| + |t-R| \le R} u(z,t)
\]
\[
\le \max\{M_{r,R}f(x),M_{l,R}f(x)\} \le M^1_{\equiv R} f(x). 
\]
That is exactly what we wanted to prove. 
\end{proof}

\begin{figure} 
\begin{tikzpicture}
\draw[-|,semithick] (-4,0)--(0,0);
\draw[|-,semithick] (0,0)--(4,0);
\draw[-,thick] (0,0) -- (3,3);
\draw[-,thick] (0,0) -- (-3,3);
\draw[dashed] (1,4)--(-3/2,3/2);
\draw[dashed] (1,4)--(5/2,5/2);
\draw[-,thick] (-3,3)--(3,3);
\draw (1.5,4) node{$(y,t)$};
\draw (-3,3/2) node{$\left(\frac{x+y-t}{2},\frac{x-y+t}{2}\right)$};
\draw (4,5/2) node{$\left(\frac{x+y+t}{2},\frac{y-x+t}{2}\right)$};
\draw (0,-0.4) node{$x$};
\fill[brown, opacity=0.3] (0,0) -- (3,3) -- (-3,3) -- (0,0) -- cycle;
\end{tikzpicture} 
\caption{Illustration of Lemma \ref{BPL}: the points $\left(\frac{x+y-t}{2},\frac{x-y+t}{2}\right)$ and $\left(\frac{x+y+t}{2},\frac{y-x+t}{2}\right)$ are the projections of $(y,t)$ over the lines 
$t=x-y$ and $t=y+x,$ respectively.}
\end{figure}
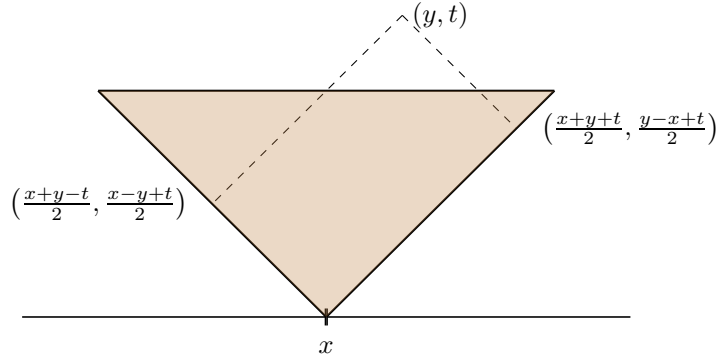

Let $R$ be then selected such that $\frac{t}{2} < R $ and $R(1-\alpha) < \alpha t.$ For $\alpha > \frac{1}{3}$ this is possible. This condition is \emph{exactly} the condition so that the region 
\[
\{(z,t'): |z-y| + |t'-R| \le R\} \subset C(y,\alpha,t).
\]
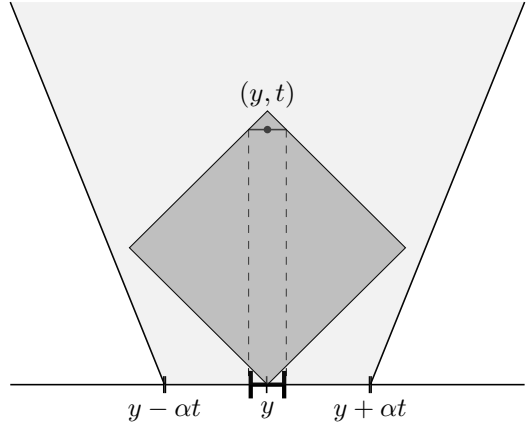
\begin{figure}
\centering
\begin{tikzpicture}[scale=0.676425]
        \draw[-,semithick] (0,-0.5) -- (-3,15/2 - 0.5);
        \draw[-|,semithick] (-3,-0.5) -- (0,-0.5);
	\draw[|-|,semithick] (0,-0.5) -- (2,-0.5);
	\draw[-|,semithick] (2,-0.5) -- (4,-0.5);
	\draw[|-,semithick] (4,-0.5) -- (7,-0.5);
	\draw[-,semithick] (4,-0.5) -- (7,15/2 - 0.5);	
	\draw (2,4.5) node[circle,fill,inner sep=1pt]{} ;
	\draw[-,semithick] (20/3-4.3,4.5) -- (8.3 - 20/3,4.5);
	\draw[dashed] (20/3-4.3,4.5) -- (20/3-4.3,-0.5);
	\draw[dashed] (8.3-20/3,4.5) -- (8.3-20/3,-0.5);
	\draw[|-|,ultra thick] (8.3-20/3,-0.5) -- (20/3-4.3,-0.5);
% 	\draw[-,semithick] (2,-0.5) -- (2+10/3,10/3-0.5);
% 	\draw[-,semithick] (2,-0.5) -- (2-10/3,-0.5+10/3);
% 	\draw[-,semithick] (2+10/3,10/3-0.5) -- (2,20/3 - 0.5);
% 	\draw[-,semithick] (2,20/3-0.5) -- (2-10/3,-0.5+10/3);
	%\draw[draw=black, fill=gray, fill opacity=0.2] (2,-0.5) -- (2+10/3,10/3-0.5) -- (2,20/3-0.5) -- (2-10/3,10/3-0.5) -- cycle;
	\draw[draw=black, fill=gray, fill opacity=0.44523] (2,-0.5) -- (2 + 10/3 - 3.15 +2.5, 10/3 - 3.15 + 2) -- (2, 20/3 - 6.3 + 4.5) -- (2-10/3 +3.15 -2.5, 10/3 - 3.15 + 2) -- cycle;
	\fill[gray, opacity=0.1] (-3,15/2 - 0.5) -- (7,15/2 - 0.5) -- (4,-0.5) -- (0,-0.5) -- cycle;
	%\fill [gray] (2,-0.5) rectangle (2,20/3-0.5);
	
	\draw (2,5.2) node {$ (y,t)$};
	\draw (0,-1) node {$y-\alpha t$};
	\draw (2,-1) node {$y$};
	\draw (4,-1) node {$y+\alpha t$};
\end{tikzpicture}\\
\caption{In the figure, the \textbf{\color{gray}dark gray} area represents the region that our Lemma gives, for some $\frac{1}{2} t < R < \frac{\alpha}{1-\alpha} t,$ and the \textbf{black} interval is one in which $M^{\alpha}f =M^1_{\equiv R} f \equiv M^{\alpha}f(y).$}
\end{figure}
Now we are able to end the proof: if $I$ is a sufficiently small interval around $y$, then, by continuity, it must hold true that the regions 
\[
\{(z,t'): |z-y'| + |t'-R| \le R\} \subset C(y,\alpha,t),
\]
for all $y' \in I.$ This is our desired interval for which $M^{\alpha}f \equiv M^1_{\equiv R} f.$ But we already know that, from \cite[Lemma~3.6]{aldazperezlazaro}, $M^1_{\equiv R} f$ satisfies a \emph{stronger} property of control of maxima. Indeed, in order to fit it into the context of Aldaz and P\'erez L\'azaro, we note that, by adopting 
$NORM(1),$ $f$ becomes automatically upper semicontinuous, and also $f \le M^1_{\equiv R}f$ everywhere. In particular, 
we know that, if $M^1_{\equiv R} f$ is constant in an interval, then it must be \emph{equal} to the function $f$ at \emph{every} point of that interval. But this is exactly our case, as we have already noticed that $M^{\alpha} f$ is constant on $[y-\alpha t, y+ \alpha t],$ 
and therefore also on $I$. This implies, in particular, that 
\[ 
M^{\alpha}f(y)= M^1_{\equiv R}f(y) = f(y),
\]
which concludes our analysis of local maxima. \\ 

\subsection{Proof of $\V(M^{\alpha}f) \le \V(f),$ for $\alpha \ge \frac{1}{3}$} We remark, before beginning, that this strategy, from now on, is essentially the same as the one contained in \cite{aldazperezlazaro}. We will, therefore, assume
that $f \ge 0$ throughout. \\ 

First, we say that a function $g : I \to \R$ is \emph{V-shaped} if there exists a point $c \in I$ such that 
\[
g|_{(l(I),c)} \text{ is non-increasing and } g|_{(c,r(I))} \text{ is non-decreasing}.
\]
We then present two different proof of this inequality, each of each suitable for a different purpose.\\

\noindent\textit{First proof: Using Lipschitz functions.} For this, we will suppose that $f$ has $NORM(1)$ as normalization. One can easily check then that $M^{\alpha}f \in C(\R)$ in this case. In fact, it is not difficult to show also that 
$M^{\alpha}f$ is continuous wt $x$ if $f$ is continuous at $x$. Moreover, we may prove an aditional property about it that will help us later: 
\begin{lemm}[Reduction to the Lipschitz case]\label{reduction} Suppose we have that  $$\V(M^{\alpha} f) \le \V(f), \;\; \forall f \in BV(\R) \cap \emph{\text{Lip}}(\R).$$ Then the same inequality holds for all Bounded Variation functions, that is, 
$$ \V(M^{\alpha} f) \le \V(f), \;\; \forall f \in BV(\R).$$ 
\end{lemm}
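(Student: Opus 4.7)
The plan is to approximate a general $f\in BV(\R)$ by its mollifications. I would fix $\phi\in C_c^\infty(\R)$ nonnegative with $\int\phi=1$, set $\phi_n(x)=n\phi(nx)$, and define $f_n:=f*\phi_n$. Three standard facts would then be verified up front: first, each $f_n$ is smooth, and since $f\in BV(\R)\subset L^\infty(\R)$ one has $\|f_n'\|_\infty\le\|f\|_\infty\|\phi_n'\|_1<\infty$, so $f_n$ is globally Lipschitz; second, the total variation is nonincreasing under convolution with a nonnegative unit-mass kernel (by pulling the sum of increments inside the integral against $\phi_n$ and applying the triangle inequality), giving $\V(f_n)\le \V(f)$ and in particular $f_n\in BV(\R)\cap\text{Lip}(\R)$; third, $f_n\to f$ at every continuity point of $f$, hence almost everywhere.

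The heart of the argument is then the pointwise convergence $M^{\alpha}f_n\to M^{\alpha}f$ almost everywhere. The easy half,
\[
\liminf_{n\to\infty} M^{\alpha}f_n(x)\ge M^{\alpha}f(x)\qquad\text{for every }x\in\R,
\]
follows by freezing any admissible pair $(y,t)$ with $|y-x|\le \alpha t$ and passing to the limit in $\dashint_{y-t}^{y+t}f_n\to\dashint_{y-t}^{y+t}f$. For the reverse inequality, the key estimate I would prove is the convolution comparison
\[
M^{\alpha}f_n(x)\le (M^{\alpha}f)*\phi_n(x)\qquad\text{for every }x\in\R,\ n\in\N.
\]
Indeed, $|f_n|\le |f|*\phi_n$, and then Fubini on each admissible interval gives
\[
\dashint_{y-t}^{y+t}|f_n(s)|\,\mmd s\le \int\phi_n(z)\,\dashint_{y-z-t}^{y-z+t}|f(s)|\,\mmd s\,\mmd z\le \int\phi_n(z)\,M^{\alpha}f(x-z)\,\mmd z,
\]
since the translated interval $(y-z-t,y-z+t)$ remains admissible for the translated base point $x-z$, as $|(y-z)-(x-z)|=|y-x|\le\alpha t$. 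Taking the supremum over admissible $(y,t)$ yields the claim. Since $M^{\alpha}f\in L^1_{\mathrm{loc}}(\R)$, Lebesgue differentiation provides $(M^{\alpha}f)*\phi_n(x)\to M^{\alpha}f(x)$ at almost every $x$, which combined with the $\liminf$ lower bound gives the desired pointwise a.e. convergence.

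Once pointwise a.e. convergence of $M^{\alpha}f_n$ to $M^{\alpha}f$ is secured, the lower semicontinuity of the total variation under such convergence closes the argument:
\[
\V(M^{\alpha}f)\le \liminf_{n\to\infty}\V(M^{\alpha}f_n)\le \liminf_{n\to\infty}\V(f_n)\le \V(f),
\]
where the middle inequality is the hypothesis applied to the Lipschitz $BV$ function $f_n$, and the outer inequalities use the variation estimate for mollification and lower semicontinuity, respectively. The only genuinely nontrivial ingredient is the $\limsup$ half of the pointwise convergence of the maximal functions, for which the convolution comparison $M^{\alpha}f_n\le (M^{\alpha}f)*\phi_n$ is the essential observation; the remainder is soft.
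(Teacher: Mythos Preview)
Your proof is correct and follows the same overall strategy as the paper's (mollify, control variation of the mollifiers, pass to the limit in the maximal function), but the key technical step---the $\limsup$ half of the convergence $M^{\alpha}f_n\to M^{\alpha}f$---is handled differently. The paper argues by contradiction: assuming $M^{\alpha}f_{\varepsilon_k}(x_0)>(1+2\eta)\limsup_{y\to x_0}M^{\alpha}f(y)$ along a sequence, it picks near-optimal averages at radii $r_k$ and splits into the cases $r_k\to 0$ (handled via the $NORM(1)$ upper-semicontinuity of $f$) and $\inf_k r_k>0$ (handled by a Fubini estimate yielding a contradiction). Your route bypasses the case split entirely through the pointwise inequality $M^{\alpha}(f*\phi_n)\le (M^{\alpha}f)*\phi_n$, which follows from translating the admissible cone and is the cleaner observation; it immediately reduces the $\limsup$ bound to the a.e.\ convergence of $(M^{\alpha}f)*\phi_n$, a soft Lebesgue-point fact. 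The paper's method yields a two-sided pointwise sandwich $\liminf_{y\to x}M^{\alpha}f(y)\le\liminf M^{\alpha}f_\varepsilon(x)\le\limsup M^{\alpha}f_\varepsilon(x)\le\limsup_{y\to x}M^{\alpha}f(y)$ valid at \emph{every} $x$ (not just a.e.), at the cost of invoking the specific normalization; your argument gives only a.e.\ convergence but needs no normalization at all and is more conceptual. Either is sufficient for the variation inequality, since a.e.\ convergence already delivers lower semicontinuity of the (essential) variation.
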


\begin{proof}
Let $\varphi \in \mathcal{S}(\R)$ be a smooth, nonnegative function such that $\int_{\R} \varphi (t) \mmd t = 1$, $\text{supp}(\varphi) \subset [-1,1],$ $\varphi$ is even and non-increasing on $[0,1].$ Call $\varphi_{\varepsilon} (x) = \frac{1}{\varepsilon}\varphi(\frac{x}{\varepsilon}).$ We define then $f_{\varepsilon}(x) = f * \varphi_{\varepsilon} (x).$ 
Notice that these functions are all Lipschitz (in fact, smooth) functions. Moreover, by standard theorems on Approximate Identities, we have that $f_{\varepsilon}(x) \to f(x)$ almost everywhere. Therefore, assuming the Theorem to hold for Lipschitz Functions, we have: 

\begin{align*} 
\V (M^{\alpha} f_{\varepsilon}) & \le \V(f_{\varepsilon}) \cr
 & = \sup_{x_1 < \cdots < x_N} \sum_{i=1}^{N-1} |f_{\varepsilon}(x_{i+1}) - f_{\varepsilon}(x_i)| \cr
 & \le  \int_{\R} \varphi_{\varepsilon}(t) \sup_{x_1 < \cdots < x_N} \left(\sum_{i=1}^{N-1} |f(x_{i+1} - t) - f(x_i - t)|\right)\mmd t \cr
 & \le \V(f). \cr
\end{align*}
Thus, it suffices to prove that 

\begin{equation}\label{convv}
\limsup_{y \to x} M^{\alpha}f(y) \ge \limsup_{\varepsilon \to 0} M^{\alpha}f_{\varepsilon} (x) \ge \liminf_{\varepsilon \to 0} M^{\alpha}f_{\varepsilon}(x) \ge \liminf_{y \to x} M^{\alpha}f(y), \,\,\forall x \in \R,
\end{equation}
as then
\begin{equation}\label{ineqqq}
\V(M^{\alpha}f) = \V(\liminf_{\varepsilon \to 0} M^{\alpha}f_{\varepsilon}) = \V(\limsup_{\varepsilon \to 0} M^{\alpha}f_{\varepsilon}) = \V(\lim_{j \to \infty} M^{\alpha}f_{\varepsilon_j}) \le \V(f).
\end{equation}

The proof of the equalities in \ref{ineqqq} is a direct consequence of \ref{convv}, where $\varepsilon_j \to 0$ as $j \to \infty,$ and the inequality is just a consequence of Fatou's Lemma.\\ 

Let us suppose, for the sake of a contradiction, that either the first or the third inequalities in \ref{convv} are not fulfilled. Therefore, we focus on the first inequality: suppose that there exists a real number $x_0$, a sequence $\varepsilon_k \to 0$ and a positive real number
$\eta > 0$ such that 
$$ M^{\alpha}f_{\varepsilon_k}(x_0) >(1+2\eta)\limsup_{y \to x_0}M^{\alpha}f(y).$$
%The other case is the one of $ M^{\alpha}f_{\varepsilon_k}(x_0) < (1- 2\eta)M^{\alpha}f(x_0)$, and it can be analogously handled.
By definition, there exists a sequence $(y_k,r_k)$ with 
$|y_k - x_0| \le \alpha r_k$ and 

\[ 
\dashint_{y_k-r_k}^{y_k + r_k} f_{\varepsilon_k}(s) \mmd s > (1+\eta) \limsup_{y \to x_0}M^{\alpha}f(y).
\]
\textit{Case 1:} Suppose $r_k \to 0.$ By the way we normalized $f$, there is an interval $I \ni x_0$ such that $f(y) \le (1+\eta/4)f(x_0), \forall y \in I.$ But then, by the support properties of $\varphi$ and for $k$ sufficiently large, we would have that 
$(1+\eta/2)f(x_0) \ge M^{\alpha}f_{\varepsilon_k}(x_0),$ which is a contradiction, as $\limsup_{y \to x_0}M^{\alpha}f(y) \ge f(x_0).$\\ 

\noindent\textit{Case 2:} Let then $\inf_k r_k >0.$ Then, by Fubini's theorem and manipulations, 

\begin{align*}
\dashint_{y_k-r_k}^{y_k+r_k} f_{\varepsilon_k}(s) \mmd s & = \dashint_{y_k-r_k}^{y_k+r_k} \left( \int_{-\varepsilon_k}^{\varepsilon_k} \varphi_{\varepsilon_k}(t) f(s-t) \mmd t\right) \mmd s \cr 
 & = \int_{-\varepsilon_k}^{\varepsilon_k}\varphi_{\varepsilon_k}(t) \left(\dashint_{y_k-r_k}^{y_k+r_k} f(s-t) \mmd s\right) \mmd t \cr
 & \le \frac{r_k + \varepsilon_k}{r_k} M^{\alpha}f(x_0). \cr
\end{align*}
This implies $r_k \le \frac{\varepsilon_k}{\eta} \to 0,$ which is another contradiction. \\ 

For the third inequality, we divide it once again: if $M^{\alpha}f(x_0) = u(y,t)$ for some $(y,t) \ne (x_0,0),$ then, by $L^1$ convergence of approximate identities, one easily gets that $\liminf_{\varepsilon\to 0} M^{\alpha}f_{\varepsilon} (x_0) \ge M^{\alpha}f(x_0).$ If not, pick $(y,t)$ such that $M^{\alpha}f(x_0) \le u(y,t) + \frac{\delta}{2}.$ 
Use then the $L^1$ convergenge of approximate identities in the interval $(y-t,y+t)$. The reverse inequality, and therefore the lemma, is proved, as $M^{\alpha}f(x) \ge \liminf_{y \to x} M^{\alpha}f(y).$
\end{proof}

Our main claim is then the following:

\begin{lemm}\label{lipproof}Let $f \in \text{Lip}(\R)\cap BV(\R).$ Then, over every interval of the set 
\[ 
E_{\alpha} = \{x \in \R\colon M^{\alpha}f(x) > f(x)\} = \bigcup_{j \in \Z} I_j^{\alpha},
\]
it holds that $M^{\alpha}f$ is either \emph{monotone} or \emph{V shaped} in $I_j^{\alpha}.$ 
\end{lemm}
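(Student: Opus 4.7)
The plan is to follow the strategy of \cite{aldazperezlazaro} by reducing the structural claim on each component to the \emph{absence of strict interior local maxima} of $M^\alpha f$ on $E_\alpha$, and then feeding this into the maxima analysis carried out in Section~3.1. As announced in the text I assume $f\ge 0$ throughout.

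First I would record that, because $f\in\text{Lip}(\R)$, a standard translation argument (replacing the extremizing pair $(y,t)$ by $(y+h,t)$ inside the defining supremum) yields that $M^\alpha f$ is Lipschitz on $\R$ with the same constant as $f$. In particular $M^\alpha f$ is continuous, so $E_\alpha=\{M^\alpha f>f\}$ is open and decomposes as $E_\alpha=\bigcup_j I_j^\alpha$ with $I_j^\alpha=(a_j,b_j)$ open intervals on whose finite endpoints $M^\alpha f=f$ holds by continuity.

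Next I would prove the following elementary real-analytic fact: if $g\colon(a,b)\to\R$ is continuous and is neither monotone nor V-shaped, then there exist $a<x_1<x_0<x_3<b$ with $g(x_0)=\max_{[x_1,x_3]}g$ and $g(x_0)>\max\{g(x_1),g(x_3)\}$, i.e.\ $g$ has a \emph{strict} interior local maximum in exactly the sense used in Section~3.1. The proof is a short case split: non-monotonicity produces a triple $y_1<y_2<y_3$ with either $g(y_2)>\max(g(y_1),g(y_3))$ (in which case we are done by taking $x_i=y_i$), or $g(y_2)<\min(g(y_1),g(y_3))$; in the second case, the failure of V-shape forces $g$ to fail to be non-increasing on $(a,y_2)$ or non-decreasing on $(y_2,b)$, and a second triple on one side of $y_2$ then realizes the first configuration.

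To conclude, suppose for contradiction that $M^\alpha f|_{I_j^\alpha}$ is neither monotone nor V-shaped on some component. Applying the fact above with $g=M^\alpha f$, I obtain a strict local maximum $x_0\in(x_1,x_3)\subset I_j^\alpha$ of $M^\alpha f$. The analysis in Section~3.1 now applies verbatim: writing $M^\alpha f(x_0)=u(y,t)$ for some extremizing pair, that analysis forces $[y-\alpha t,y+\alpha t]\subset(x_1,x_3)\subset I_j^\alpha$, shows that $M^\alpha f$ is constant on $[y-\alpha t,y+\alpha t]$, and finally uses Lemma~\ref{square} (which is where $\alpha\ge\tfrac13$ is used, to ensure the square-in-cone inclusion) together with \cite[Lemma~3.6]{aldazperezlazaro} to conclude $M^\alpha f(y)=f(y)$. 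Since $y\in I_j^\alpha$, this contradicts $y\in E_\alpha$ and the lemma follows.

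The main obstacle I anticipate is not conceptual but notational: one must be careful that the interval $[y-\alpha t,y+\alpha t]$ produced by the extremizing pair $(y,t)$ genuinely lies inside the subinterval $(x_1,x_3)$ of $I_j^\alpha$ over which $x_0$ is a strict maximum, since only then does the conclusion $M^\alpha f(y)=f(y)$ contradict membership in the detachment set. Fortunately, this was already arranged in Section~3.1 by the strictness of the maximum at $x_0$, so the argument above just inherits that bookkeeping.
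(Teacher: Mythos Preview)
Your proposal is correct and follows essentially the same route as the paper: both reduce the claim to the absence of a strict interior local maximum of $M^{\alpha}f$ on any subinterval $J\subset I_j^{\alpha}$, and then invoke the maxima analysis of Section~3.1 (ultimately Lemma~\ref{square} together with \cite[Lemma~3.6]{aldazperezlazaro}) to obtain $M^{\alpha}f(y)=f(y)$ for some $y\in J$, a contradiction. You in fact supply more detail than the paper does---the Lipschitz continuity of $M^{\alpha}f$ and the elementary ``not monotone, not V--shaped $\Rightarrow$ strict local maximum'' step---whereas the paper simply defers these to \cite[Lemma~3.9]{aldazperezlazaro}; your sketch of case~(b) of that elementary step is a little thin (you should take the vertex $c$ at a global minimizer on a suitable compact subinterval rather than at $y_2$ to make the ``second triple'' argument go through cleanly), but the claim itself is standard and the overall argument is sound.
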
 

\begin{proof}The proof goes roughly as the first paragraph of the proof of Lemma 3.9 in \cite{aldazperezlazaro}: let $I_j^{\alpha} = (l(I_j^{\alpha}),r(I_j^{\alpha})) =: (l_j,r_j),$ and suppose that $M^{\alpha}f$ is \emph{not} V shaped there. Therefore, there would be a maximal point $x_0 \in I_j^{\alpha}$ and an interval
$J \subset I_j^{\alpha}$ such that $M^{\alpha}f$ has a \emph{strict} local maximum at $x_0$ over $J.$ Then, by the maxima analysis we performed, we see that we have reached a contradiction from this fact alone, as $J \subset E_{\alpha}.$ We omit further details, as they can be found, as already mentioned, 
at \cite[Lemma~3.9]{aldazperezlazaro}. 
\end{proof} 
We also need the following 
\begin{lemm}\label{lipat} If $f \in BV(\R) \cap \text{Lip}(\R),$ then, for every (maximal) open interval $I_j^{\alpha} \subset E_{\alpha}$, we have that 
\[ 
M^{\alpha}f(l(I_j^{\alpha})) = f(l(I_j^{\alpha})),
\]
and an analogous identity holds for $r(I_j^{\alpha}).$ 
\end{lemm}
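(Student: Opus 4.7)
\smallskip

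\noindent\emph{Proof plan.} The plan is to exploit the openness of the detachment set $E_\alpha$ together with the pointwise domination $M^\alpha f \ge f$. First I would record that, since $f\in\mathrm{Lip}(\R)\cap BV(\R)$ is continuous (hence, under our standing assumption $f\ge 0$, equal to its own $NORM(1)$ normalization), the function $M^\alpha f$ is itself continuous: this can be obtained by the same argument already invoked in the previous subsection (continuity of $M^\alpha f$ at points of continuity of $f$), and only requires chasing the supremum through a small perturbation of the parameters $(y,t)$ and using local $L^1$ continuity of translations.

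Next I would observe that, for every $x\in\R$, by taking $y=x$ and letting $t\to 0^{+}$ in the definition
\[
M^{\alpha}f(x)=\sup_{|x-y|\le \alpha t}\frac{1}{2t}\int_{y-t}^{y+t}|f(s)|\,\mmd s,
\]
one gets $M^{\alpha}f(x)\ge |f(x)|=f(x)$, using continuity of $|f|$ at $x$. Thus the inequality $M^{\alpha}f\ge f$ holds \emph{everywhere}, and the set
\[
E_{\alpha}=\{x\in\R:M^{\alpha}f(x)>f(x)\}
\]
is open, being the strict superlevel set of the continuous function $M^{\alpha}f-f$.

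With $E_\alpha$ open, each connected component $I_j^{\alpha}=(l_j,r_j)$ is a maximal open interval on which $M^{\alpha}f>f$. By maximality, neither endpoint belongs to $E_{\alpha}$ (otherwise we could enlarge $I_j^{\alpha}$), so $M^{\alpha}f(l_j)\le f(l_j)$. Combined with the pointwise inequality $M^{\alpha}f(l_j)\ge f(l_j)$ from the previous step, this forces
\[
M^{\alpha}f(l_j)=f(l_j),
\]
and the same argument applied at $r_j$ yields $M^{\alpha}f(r_j)=f(r_j)$.

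The only substantive step is the initial continuity claim for $M^{\alpha}f$ when $f$ is Lipschitz; everything else is a one-line consequence of topology and of the trivial inequality $M^{\alpha}f\ge f$. I do not expect any real obstacle, since continuity of the nontangential maximal function at points of continuity of $f$ is by now a standard diagonal-sequence argument, essentially identical to the one used in \cite{aldazperezlazaro} for the uncentered case $\alpha=1$, and the passage from continuity of $f$ to continuity of $M^\alpha f$ does not interact with the geometry encoded by $\alpha$.
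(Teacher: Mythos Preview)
Your proposal is correct and is precisely the ``straightforward'' argument the paper has in mind: the paper itself omits the proof entirely, remarking only that it is direct from the definition of $E_\alpha$ and the maximality of the intervals $I_j^{\alpha}$, which is exactly the combination of $M^{\alpha}f\ge f$ everywhere (from continuity of the Lipschitz $f$) together with $l_j,r_j\notin E_\alpha$ that you spell out. The continuity of $M^{\alpha}f$ you flag as the only substantive step is likewise already recorded by the paper earlier in the same subsection, so nothing is missing.
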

The proof of this Lemma is straightforward, and we therefore skip it. To finalize the proof in this case for $\alpha > \frac{1}{3},$ we just notice that we can, in fact, bound the variation of $M^{\alpha}f$ \emph{inside} every interval $I_j^{\alpha}.$ In fact, we have directly from the last claim that, in case $M^{\alpha}f$ is V shaped on $I_j^{\alpha},$ then 
there exists $c_j \in I_j^{\alpha}$ such that $M^{\alpha}f$ is non-increasing on $(l_j,c_j)$ and non-decreasing on $(c_j,r_j)$. We then calculate: 
\begin{equation*} 
\begin{split} 
 \V_{I_j^{\alpha}}(M^{\alpha}f)	& = |M^{\alpha}f(l(I_j^{\alpha})) - M^{\alpha}f(c_j)| + |M^{\alpha}f(r(I_j^{\alpha})) - M^{\alpha}f(c_j)| \cr
				& \le |f(l(I_j^{\alpha})) - f(c_j)| + |f(r(I_j^{\alpha})) - f(c_j)| \cr 
				& \le V_{I_j^{\alpha}}(f).
\end{split}
\end{equation*}
The way to formally end the proof is the following: let $\mathcal{P} = \{x_1 < \cdots < x_N\}$, and let $A := \{ j \in \N: \exists x_i \in \mathcal{P}\cap I_j^{\alpha}\}.$ Clearly, the index set $A$ is finite. We refine the partition $\mathcal{P}$ by adding to it the following points: 
\begin{itemize} 
\item If $j \in A$ and $M^{\alpha}f$ is \emph{monotone} over $I_j^{\alpha},$ then add $l_j,r_j$ to the partition;
\item If $j \in A$ and $M^{\alpha}f$ is \emph{V shaped} over $I_j^{\alpha},$ then add $l_j,r_j$ and the point $c_j$ to the partition. 
\end{itemize} 
Call this new partition $\mathcal{P}'.$ By the calculation above and the fact that, if $f \in \text{Lip}(\R) \Rightarrow M^{\alpha}f \ge f \text{ everywhere},$ and in particular $M^{\alpha}f = f$ at $\R \backslash E_{\alpha},$ one obtains that 
\[
\V_{\mathcal{P}}(M^{\alpha}f) \le \V_{\mathcal{P}'}(M^{\alpha}f) \le \V(f).
\]
By taking a supremum over all partitions, we finish the result for $\alpha > \frac{1}{3}.$ On the other hand, it is straight from the definition that 
\[ 
\beta \le \alpha \Rightarrow \frac{\beta}{\alpha} M^{\alpha}f \le M^{\beta}f \le M^{\alpha}f.
\]
This implies that, for a partition $\mathcal{P}$ as above, 

\[ 
\sum_{i=1}^{N-1} |M^{\frac{1}{3}}f(x_{i+1}) - M^{\frac{1}{3}}f(x_i)| \le \lim_{\alpha \searrow \frac{1}{3}} \sum_{i=1}^{N-1} |M^{\alpha}f(x_{i+1}) - M^{\alpha}f(x_i)| \le \V(f).
\]
The theorem follows, again, as before. \\

\noindent\textit{Second proof: Directly for $f\in BV(\R)$ general.} For this part, we assume that $f$ has $NORM(\alpha)$ normalization. 
The argument here is morally the same, with just a couple of minor modifications -- and with the use of the facts we proved above, namely, that the result \emph{already} holds. Therefore, this section might seem a little bit 
superfluous now, even though its reason of being is going to be shown while we characterize the extremizers. \\ 

\begin{claim} Let $E_{\alpha} = \{x \in \R: M^{\alpha}f(x) > f(x)\}.$ This set is open for any $f \in BV(\R)$ normalized wiht $NORM(\alpha)$ and therefore can be decomposed as 
\[ 
E_{\alpha} = \cup_{j \in \Z} I_j^{\alpha},
\]
where each $I_j^{\alpha}$ is an interval. Furthermore, the restriction of $M^{\alpha}f$ to each of those intervals is either a monotone function or a V shaped function with a minimum at $c_j \in I_j^{\alpha}.$ Moreover, $M^{\alpha}f(c_j) < \min\{M^{\alpha}f(l(I_j^{\alpha})),M^{\alpha}f(r(I_j^{\alpha}))\}.$ 
\end{claim}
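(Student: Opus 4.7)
The plan is to establish the three assertions of the claim in turn—openness of $E_\alpha$, the monotone/V-shape structure on each component, and the strict inequality at the minimum—each relying on the maxima analysis of Section 3.1 and the specific design of the $NORM(\alpha)$ normalization.

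First, to see that $E_\alpha$ is open, I would observe that $M^\alpha f$ is lower semicontinuous as the pointwise supremum of the continuous averages $u(y,t)$, while the normalization $\tilde{\mathcal{N}}_\alpha f$ was designed precisely so that $\{M^\alpha f > f\}$ becomes open: the surgery in its definition modifies $f$ exactly at those discontinuities of $\mathcal{N}_\alpha f$ that would otherwise obstruct openness (as illustrated by the example $f = \chi_{(0,(1-\alpha)/4]} + \tfrac{1}{2}\chi_{((1-\alpha)/4,(1-\alpha)/2]} + \chi_{((1-\alpha)/2,1]}$ discussed earlier). With openness in hand, $E_\alpha$ decomposes uniquely into an at most countable union of disjoint maximal open intervals $I_j^\alpha$.

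Next, for the monotone/V-shape structure on each $I_j^\alpha$, I would adapt the argument of Lemma 3.9 of \cite{aldazperezlazaro} to this setting: assume for contradiction that $M^\alpha f|_{I_j^\alpha}$ is neither monotone nor V-shaped, so that one can find an interval $[a,b]\subset I_j^\alpha$ over which $M^\alpha f$ attains a strict interior local maximum at some $x_0$, with $M^\alpha f(x_0) > \max\{M^\alpha f(a), M^\alpha f(b)\}$. The maxima analysis already carried out above then produces a point $y$ at which $M^\alpha f(y) = f(y)$, directly contradicting $I_j^\alpha \subset E_\alpha$.

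For the strict inequality $M^\alpha f(c_j) < \min\{M^\alpha f(l(I_j^\alpha)), M^\alpha f(r(I_j^\alpha))\}$, assume by symmetry that equality holds on the left, i.e.\ $M^\alpha f(c_j) = M^\alpha f(l(I_j^\alpha))$. Non-increasingness of $M^\alpha f$ on $(l(I_j^\alpha), c_j)$ then forces $M^\alpha f$ to be constant there. Invoking once more the constancy-implies-attachment mechanism of the maxima analysis—realizing $M^\alpha f$ as $M^1_{\equiv R} f$ via Lemma \ref{square} on a small interior subinterval and then appealing to the relevant attachment property from \cite[Lemma~3.6]{aldazperezlazaro}—yields a point $y$ in this constant segment with $M^\alpha f(y) = f(y)$, again contradicting $I_j^\alpha \subset E_\alpha$.

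The subtlest point I anticipate is the last step: to apply faithfully the Aldaz–Pérez Lázaro attachment lemma (stated under $NORM(1)$) in our $NORM(\alpha)$ framework, one must carefully track where the two normalizations disagree and verify that no constant segment of $M^\alpha f$ can lie entirely within $E_\alpha$ without forcing an attachment point. The openness and monotone/V-shape portions should then follow rather formally from the normalization and the maxima analysis already in place.
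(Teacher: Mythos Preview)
Your proposal is correct and follows essentially the same approach as the paper: openness via lower semicontinuity of $M^\alpha f$ together with the key property of $NORM(\alpha)$ that $M^\alpha f(x_0) > f(x_0)$ forces $M^\alpha f(x_0) > \limsup_{z\to x_0} f(z)$, and the monotone/V-shape dichotomy via the maxima analysis of Section~3.1 exactly as in Lemma~\ref{lipproof}. The paper's own proof is in fact more terse than yours---it omits the strict-inequality clause entirely---and your extra argument there is not really needed: if $M^\alpha f(c_j) = M^\alpha f(l(I_j^\alpha))$ then $M^\alpha f$ is constant on $(l(I_j^\alpha),c_j)$ and non-decreasing on $(c_j,r(I_j^\alpha))$, hence monotone on all of $I_j^\alpha$, so the case should have been classified as monotone rather than V-shaped.
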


\begin{proof}[Proof of the claim] The claim seems quite sophisticated, but its proof is simple, once one has done the maxima analysis we have done. The fact that $E_{\alpha}$ is open is easy to see. In fact, let $x_0 \in E_{\alpha}.$ By the lower semicontinuity of $M^{\alpha}f$ at $x_0$ and the fact that we normalized
 $f$ with $NORM(\alpha),$ 
\[
\liminf_{z \to x_0} M^{\alpha}f(z) \ge M^{\alpha}f(x_0) > \limsup_{z \to x_0} f(z).
\]
This shows that, for $z$ close to $x_0$, the strict inequality should still hold, as desired. \\ 

The second part follows in the same fashion as the proof of Lemma \ref{lipproof}, and we therefore omit it. 
\end{proof}

To finish the proof of the fact that $\V_{I_j^{\alpha}}(M^{\alpha}f) \le \V_{I_j^{\alpha}}(f)$ also in this case we just need one more lemma: 
\begin{lemm}\label{interv} For every (maximal) open interval $I_j^{\alpha} \subset E_{\alpha}$ we have that 
\[ 
M^{\alpha}f(l(I_j^{\alpha})) = f(l(I_j^{\alpha})),
\]
and an analogous identity holds for $r(I_j^{\alpha}).$ 
\end{lemm}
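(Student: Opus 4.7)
The plan is to get the lemma essentially for free from the fact, established in the preceding Claim, that under $NORM(\alpha)$ the detachment set $E_\alpha$ is open. Once one grants this, the argument becomes almost a tautology: since $I_j^\alpha$ is a maximal open subinterval of the open set $E_\alpha$, its left endpoint $l_j := l(I_j^\alpha)$ cannot lie in $E_\alpha$ itself; otherwise openness would produce a small neighborhood of $l_j$ contained in $E_\alpha$, contradicting maximality of $I_j^\alpha$. By the very definition of $E_\alpha$, this already yields the one-sided inequality $M^\alpha f(l_j) \le f(l_j)$.

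For the reverse direction, I would invoke what was recorded about the normalization in Section~2. The renormalized function $f = \tilde{\mathcal{N}}_\alpha f$ is, by construction, in one of two cases at each point $x$: either $M^\alpha f(x) > \limsup_{y\to x} f(y)$, in which case $f(x) = \mathcal{N}_\alpha f(x) \le M^\alpha f(x)$ by the pointwise bound $\mathcal{N}_\alpha f \le M^\alpha f$ noted in the section on normalizations; or else $\limsup_{y\to x} f(y) \ge M^\alpha f(x) \ge \mathcal{N}_\alpha f(x)$, in which case the second branch of $\tilde{\mathcal{N}}_\alpha$ gives $f(x) = M^\alpha f(x)$ outright. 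In both cases one has $f(x) \le M^\alpha f(x)$, and applying this at $x = l_j$ and combining with the previous paragraph gives $M^\alpha f(l_j) = f(l_j)$. The symmetric argument at the right endpoint $r(I_j^\alpha)$ is identical.

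I do not expect a genuine obstacle here; the entire substance of the lemma is hidden in having chosen the normalization $NORM(\alpha)$ rather than $\mathcal{N}_\alpha$ itself. The example given just before the $\tilde{\mathcal{N}}_\alpha$ definition in Section~2 shows that without this refinement $E_\alpha$ need not be open, and indeed the statement of Lemma~\ref{interv} can fail at such boundary points: one would land in a situation where $M^\alpha f(l_j) > f(l_j)$ while $l_j$ sits on the boundary of the maximal interval. So the only thing that deserves care in writing this up is to point explicitly to the openness of $E_\alpha$ from the Claim and to the two-case pointwise inequality $f \le M^\alpha f$ that $NORM(\alpha)$ was designed to guarantee; no further geometric, covering, or integral-averaging argument is required.
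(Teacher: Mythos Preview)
Your argument is correct and is precisely the unpacking of what the paper means by ``direct from the definition and the maximality of the intervals $I_j^{\alpha}$'': maximality together with the openness of $E_{\alpha}$ forces $l_j\notin E_{\alpha}$, giving $M^{\alpha}f(l_j)\le f(l_j)$, while the two-branch definition of $NORM(\alpha)$ guarantees the pointwise inequality $f\le M^{\alpha}f$, yielding equality. There is nothing to add.
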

 
This is, just like Lemma \ref{lipat}, direct from the definition and the maximality of the intervals $I_j^{\alpha}.$ The conclusion in this case uses Lemma \ref{interv} in a direct fashion, combined with the strategy for the first proof: namely, the estimate 
\begin{equation*} 
\begin{split} 
 \V_{I_j^{\alpha}}(M^{\alpha}f) & \le |M^{\alpha}f(l(I_j^{\alpha})) - M^{\alpha}f(c_j)| + |M^{\alpha}f(r(I_j^{\alpha})) - M^{\alpha}f(c_j)| \cr
				& \le |f(l(I_j^{\alpha})) - f(c_j)| + |f(r(I_j^{\alpha})) - f(c_j)| \cr 
				& \le V_{I_j^{\alpha}}(f)
\end{split}
\end{equation*}
still holds, by Lemma \ref{interv} and by the fact that $c_j \in I_j^{\alpha}.$ This finishes finally the second proof of Theorem \ref{angle}.

\subsection{Absolute continuity on the detachment set} We prove briefly the fact that, for $f \in W^{1,1}(\R),$ then we have that $M^{\alpha}f \in W^{1,1}_{loc}(\R)$ for
\emph{any} $1>\alpha>0,$ as the case $\alpha = 0$ has been dealt with by Kurka \cite{kurka}, in Corollary 1.4. \\ 

Indeed, let 
\[
E_{\alpha,k} = \{ x \in E_{\alpha} \colon  M^{\alpha}f(x) = \sup_{(y,t) \colon |y-x|\le \alpha t, \,t \ge \frac{1}{2k}} \frac{1}{2t}\int_{y-t}^{y+t} |f(s)| \mmd s \}. 
\]
Then we see that $E_{\alpha}= \cup_{k \ge 1} E_{\alpha,k}.$ Moreover, for $x,y \in E_{\alpha,k},$ let then $(y_1,t_1)$ have this property for $x$. Suppose also, without loss of generality, that $y \ge x$ and $M^{\alpha}f(x) > M^{\alpha}f(y).$ By assuming that $y > y_1 + \alpha t_1$ -- as otherwise $M^{\alpha}f(x) \le M^{\alpha}f(y)$ --, we have that 
\begin{equation*}
\begin{split}
M^{\alpha}f(x) - M^{\alpha}f(y) & \le \frac{1}{2t_1} \int_{y_1-t_1}^{y_1 + t_1} |f(s)| \mmd s -u\left(\frac{y+\alpha y_1 - \alpha t_1}{1+\alpha},\frac{y-y_1+t_1}{1+\alpha}\right) \cr 
				& \le \frac{\frac{2}{1+\alpha}(y-y_1) -\frac{2\alpha}{1+\alpha}t_1}{2t_1 \cdot \frac{2}{1+\alpha}(y-y_1+t_1)} \int_{y_1 - t_1}^{y_1 + t_1} |f(s)| \mmd s \cr 
				& \le \frac{\frac{2}{1+\alpha}|y-x|}{\frac{2}{1+\alpha}(y-y_1+t_1)} \|f\|_{\infty}  \le \frac{|x-y|}{2t_1} \|f\|_{\infty}  \le k|x-y|\|f\|_{\infty}.\cr 
\end{split}
\end{equation*} 
This shows that $M^{\alpha}f$ is Lipschitz continuous with constant $\le k\|f\|_{\infty}$ on each $E_{\alpha,k}.$ The proof of the asserted fact, however, follows from this, by using the well-known Banach-Zarecki 
lemma: 
\begin{lemm}[Banach-Zarecki]\label{BZ} A function $g: I \to \R$ is absolutely continuous if and only if the following conditions hold simultaneously: 
\begin{enumerate} 
\item $g$ is continuous; 
\item $g$ is of bounded variation;
\item $g(S)$ has measure zero for every set $S \subset I$ with $|S| = 0.$ 
\end{enumerate}
\end{lemm}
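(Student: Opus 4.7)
The plan for the Banach--Zarecki lemma proceeds along classical lines, with the forward direction nearly immediate and the converse requiring a reduction to the monotone case. For $(\Rightarrow)$, if $g$ is absolutely continuous on $I$, then continuity and bounded variation follow directly from the definition of AC (taking $\epsilon = 1$ yields a uniform bound on the variation over chunks of length $\delta$, which sum to the total variation). To establish property (3), given $\epsilon > 0$, I would take the $\delta$ furnished by absolute continuity, cover an arbitrary null set $S \subset I$ by an open set $U$ of measure less than $\delta$, write $U$ as a countable disjoint union of open intervals $(a_k, b_k)$, and observe that $\sum_k |g(b_k) - g(a_k)| < \epsilon$ by AC applied to any finite subfamily and passing to the limit; since $g$ is continuous on each $[a_k, b_k]$, the image $g([a_k, b_k])$ is contained in an interval of length $\leq$ the oscillation of $g$ there, and summing gives $|g(S)| \leq |g(U)| < \epsilon$.

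For the converse $(\Leftarrow)$, the strategy is to reduce to the monotone case via the total variation function $V_g(x) := \V_{[a,x]}(g)$, which is nondecreasing and, because $g$ is continuous, is itself continuous. The first delicate step is to verify that $V_g$ also satisfies property (3): given a null $S$, one approximates $V_g$ on each subinterval by sums of oscillations of $g$ over refined partitions, and uses that $g(S)$ null combined with a covering argument transfers nullity to $V_g(S)$. This reduces the problem to showing that a continuous, monotone increasing function $h$ with the Luzin $(N)$ property is absolutely continuous.

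For the monotone case, $h'$ exists a.e.\ and is locally integrable with $\int_a^b h' \leq h(b) - h(a)$ by Lebesgue's monotone differentiation theorem. The key step -- and the main obstacle -- is to upgrade this to equality $h(b) - h(a) = \int_a^b h'$, which together with $h \geq 0$ characterises $h$ as the indefinite integral of its derivative, hence AC. To prove the reverse inequality, I would apply Vitali's covering theorem on the good set $G = \{x : h'(x) \text{ exists and is finite}\}$: small intervals around each $x \in G$ have image length approximately $h'(x)$ times their length, and efficient Vitali covers of $G$ yield $|h(G)| \leq \int_G h'$; the Luzin $(N)$ hypothesis handles the complementary null set where $h'$ does not exist or is infinite, ensuring its image contributes nothing to $h(b)-h(a)$. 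Once this equality is established, choosing $\delta$ so that $\int_E |h'| < \epsilon$ whenever $|E| < \delta$ completes the proof.
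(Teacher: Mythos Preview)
The paper does not actually prove this lemma: it is stated as the ``well-known Banach--Zarecki lemma'' and invoked without proof, as is standard for a classical result of this type. There is therefore no proof in the paper to compare against.

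Your outline is a correct rendition of the classical argument. The forward direction is handled cleanly. For the converse, the reduction to the monotone case via the total variation function $V_g$ and the subsequent treatment of the monotone case (Lebesgue's differentiation theorem, Vitali covering on the set where $h'$ exists finitely, and the Luzin $(N)$ property to dispose of the exceptional null set) are the standard ingredients and are correctly assembled.

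One remark: the step you flag as ``delicate'' --- transferring the Luzin $(N)$ property from $g$ to $V_g$ --- genuinely is the crux of the converse, and your sketch there is thinner than elsewhere. A clean way to execute it is to observe that for any interval $[c,d]$, the variation $V_g(d) - V_g(c)$ equals the supremum over partitions of $\sum_i \mathrm{osc}_{[x_{i-1},x_i]}(g)$, and that $\mathrm{osc}_{[x_{i-1},x_i]}(g) = |g([x_{i-1},x_i])|$ by continuity; then, given a null $S$, an efficient open cover of $S$ together with this identity bounds $|V_g(S)|$ by a quantity controlled by the measure of images under $g$ of small intervals, which the Luzin $(N)$ property of $g$ forces to be small. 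If you intend this as more than a plan, that paragraph deserves a few more lines; otherwise the proposal stands as a sound proof sketch of a result the paper simply cites.
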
 
In fact, let $S$ be then a null-measure set on the real line and $f \in W^{1,1}(\R)$ -- which implies that $M^{\alpha}f \in C(\R)$ --, and let us invoke \cite[Lemma~3.1]{aldazperezlazaro}:

\begin{lemm}\label{lipk} Let $f: I \to \R$ be a continuous function. Let also $E \subset \{x \in I \colon |\overline{D}f(x)|:=\left| \limsup_{h \to 0} \frac{f(x+h)-f(x)}{h}\right| \le k\}.$ Then 
\[
m^{*}(f(E)) \le k m^{*}(E),
\]
where $m(S) = |S|$ stands for the Lebesgue measure of $S$.
\end{lemm}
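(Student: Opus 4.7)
The plan is to prove Lemma \ref{lipk} by a countable decomposition of $E$ into pieces on which $f$ satisfies a genuine two-point Lipschitz estimate, followed by a short covering argument on each piece. The pointwise bound $|\overline{D}f(x)|\le k$ does not give any uniform Lipschitz control on a neighborhood of $x$, so the whole difficulty is converting the infinitesimal information on $E$ into a uniform estimate on sets whose total outer measure approaches that of $E$.

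Fix $\varepsilon>0$ and, for each $n\in\N$, set
\[
E_n := \{\, x \in E \colon |f(y)-f(x)| \le (k+\varepsilon)|y-x| \text{ for every } y \in I \text{ with } |y-x|\le 1/n \,\}.
\]
The Dini derivative hypothesis together with continuity of $f$ supplies, for each $x\in E$, some $\delta_x>0$ such that $|f(x+h)-f(x)| \le (k+\varepsilon)|h|$ whenever $|h|<\delta_x$. Choosing $n$ with $1/n<\delta_x$ places $x$ in $E_n$, so $E=\bigcup_n E_n$ and the sequence $(E_n)$ is increasing in $n$.

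Next, cover $I$ by the dyadic-style grid of intervals $I_j^n := [j/n,(j+1)/n]$, $j\in\Z$. Any two points of $E_n\cap I_j^n$ are within distance $1/n$, so by the very definition of $E_n$, the restriction $f|_{E_n\cap I_j^n}$ is $(k+\varepsilon)$-Lipschitz in the two-point sense. A standard outer-measure argument (for each $\eta>0$ cover $E_n\cap I_j^n$ by open intervals of total length $\le m^*(E_n\cap I_j^n)+\eta$, and observe that the image of each such sub-interval under $f$ is contained in an interval of length at most $(k+\varepsilon)$ times its own length, since $f$ is continuous) yields
\[
m^*\!\left(f(E_n\cap I_j^n)\right) \le (k+\varepsilon)\, m^*(E_n\cap I_j^n).
\]
Summing over $j$ and using countable subadditivity of $m^*$ produces
\[
m^*\!\left(f(E_n)\right) \le \sum_{j\in\Z} m^*\!\left(f(E_n\cap I_j^n)\right) \le (k+\varepsilon)\sum_{j\in\Z} m^*(E_n\cap I_j^n) = (k+\varepsilon)\, m^*(E_n).
\]
Finally, since $E_n\nearrow E$ we have $f(E_n)\nearrow f(E)$, and upward continuity of Lebesgue outer measure along increasing sequences gives $m^*(f(E))=\lim_n m^*(f(E_n))\le (k+\varepsilon)\,m^*(E)$. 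Letting $\varepsilon\to 0$ concludes the argument.

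The main obstacle is precisely the step from the pointwise Dini bound to the two-point Lipschitz bound used to define $E_n$: one has to be careful about interpreting $|\overline{D}f(x)|\le k$ so that both $f(x+h)-f(x)\le(k+\varepsilon)|h|$ and $f(x)-f(x+h)\le(k+\varepsilon)|h|$ hold for all small $h$, and not merely along some sequence. Once continuity of $f$ is combined with the correct interpretation of the Dini hypothesis, the remainder is a routine grid-and-subadditivity exercise; a purely Vitali-type argument (finely covering $E$ by intervals $[x-h,x+h]$ on which $f$ has oscillation $\le 2(k+\varepsilon)h$ and summing diameters of the images) is a fully analogous alternative.
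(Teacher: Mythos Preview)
The paper does not prove this lemma at all: it is quoted verbatim as \cite[Lemma~3.1]{aldazperezlazaro} and used as a black box in the absolute continuity argument, so there is no ``paper's own proof'' to compare against. Your argument is the standard one (decompose $E$ into the sets $E_n$ on which the Dini bound becomes a genuine two-point $(k+\varepsilon)$-Lipschitz estimate, localise to a grid of mesh $1/n$, cover, sum, and pass to the limit), and it is correct. Two small remarks: in Step~6 you use $\sum_j m^*(E_n\cap I_j^n)=m^*(E_n)$, which is fine because intervals are Carath\'eodory-measurable and the grid can be taken half-open; and in Step~7 you invoke upward continuity of Lebesgue \emph{outer} measure along increasing sequences, which is true but perhaps worth a one-line justification via measurable hulls. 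The caveat you raise about the interpretation of $|\overline{D}f(x)|\le k$ is exactly right: as literally written in the paper, $|\limsup_{h\to 0}(f(x+h)-f(x))/h|\le k$ bounds the $\limsup$ but says nothing about the $\liminf$, so it does not by itself give $|f(x+h)-f(x)|\le(k+\varepsilon)|h|$ for all small $h$. The lemma as used (and as stated in \cite{aldazperezlazaro}) is meant with the hypothesis $\limsup_{h\to 0}|f(x+h)-f(x)|/|h|\le k$, under which your proof goes through without change.
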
 
It is easy to see that the maximal functions $M^{\alpha}f$ are, in fact, \emph{continuous} on the open set $E_{\alpha}.$ Thus, we may use Lemmas \ref{BZ} and \ref{lipk} in each of the connected components of $E_{\alpha}:$
\begin{equation*} 
\begin{split}
|M^{\alpha}f(S\cap I_j^{\alpha})| &\le \sum_{k \ge 1} |M^{\alpha}f(S \cap E_{\alpha,k} \cap I_j^{\alpha})| = 0, \cr
\end{split}
\end{equation*} 
where we used that $M^{\alpha}f$ is Lipschitz over each $E_{\alpha,k}.$ But this implies that 
\begin{align*}
|M^{\alpha}f(S)| & \le |M^{\alpha}f(S\cap E_{\alpha}^c)| + \sum_{j \in \Z} |M^{\alpha}f(S \cap I_j^{\alpha})|\cr
		 & = |f(S \cap E_{\alpha}^c)| = 0,
\end{align*}
by Lemma \ref{BZ} and the fact that $f \in W^{1,1}_{loc}(\R).$ This finishes this part of the analysis. 

\subsection{Sharpness of the inequality and extremizers} In this part, we prove that the best constant in such inequalities is indeed 1, and characterize the extremizers for such. Namely, we mention promptly that the inequality 
\emph{must} be sharp, as $f = \chi_{(-1,0)}$ realizes equality. \\ 
It is easy to see that, to do so, we may assume that $f$ still has $NORM(\alpha)$ normalization. \\

\begin{claim}\label{inteq} Let $f \in BV(\R)$ normalized as before satisfy $\V(f) = \V(M^{\alpha}f).$ If we decompose $E_{\alpha} = \cup_{j} I_j^{\alpha},$ where each of the $I_j^{\alpha}$ is open and maximal, then
\[ 
\V_{I_j^{\alpha}}(f) = \V_{I_j^{\alpha}}(M^{\alpha}f).
\]
\end{claim}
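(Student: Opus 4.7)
The plan is to decompose the total variation of each function $g \in \{f, M^{\alpha}f\}$ into a sum of contributions from the closed intervals $\overline{I_j^{\alpha}}$ plus a contribution from the closed complement $K = \R \setminus E_{\alpha}$, and then exploit that $M^{\alpha}f = f$ on $K$ (including at the endpoints of the intervals $I_j^{\alpha}$). Combined with the term-wise bounds already obtained in the proof of Theorem \ref{angle}, the equality $\V(f) = \V(M^{\alpha}f)$ will propagate to each index $j$.

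Concretely, I would first establish the identity
\[
\V(g) = \sum_j \V_{\overline{I_j^{\alpha}}}(g) + \V_K(g), \qquad g \in BV(\R).
\]
Given any partition $\mathcal{P}$ of $\R$, refine it to $\mathcal{P}'$ by inserting the endpoints $l_j,r_j$ of every interval $I_j^{\alpha}$ meeting $\mathcal{P}$. Since all such endpoints are now present, any two consecutive points of $\mathcal{P}'$ are either both contained in some $\overline{I_j^{\alpha}}$ or both in $K$: if the open interval between two consecutive points met some $I_j^{\alpha}$, then one of $l_j,r_j$ would lie strictly between them, a contradiction. This splits $\V_{\mathcal{P}'}(g) \ge \V_{\mathcal{P}}(g)$ into contributions over each $\overline{I_j^{\alpha}}$ and over $K$, yielding $``\le"$ after taking the supremum. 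The reverse inequality comes from merging arbitrary partitions of finitely many $\overline{I_j^{\alpha}}$'s with partitions of $K$ and passing to the limit; convergence of the infinite sum is guaranteed by $\sum_j \V_{I_j^{\alpha}}(f) \le \V(f) < \infty$.

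Next, I would verify $M^{\alpha}f = f$ on $K$. On $K \setminus \bigcup_j \{l_j,r_j\}$ this is immediate from the definition of $E_{\alpha}$ (giving $M^{\alpha}f \le f$) together with the pointwise lower bound $M^{\alpha}f \ge f$ built into the $NORM(\alpha)$ normalization; at the boundary points $l_j,r_j$ themselves, Lemma \ref{interv} forces $M^{\alpha}f(l_j) = f(l_j)$ and $M^{\alpha}f(r_j) = f(r_j)$. Hence $\V_K(M^{\alpha}f) = \V_K(f)$. Subtracting the two decompositions yields
\[
\V(f) - \V(M^{\alpha}f) = \sum_j \bigl( \V_{\overline{I_j^{\alpha}}}(f) - \V_{\overline{I_j^{\alpha}}}(M^{\alpha}f) \bigr),
\]
and every summand is non-negative by the V-shape/monotone analysis carried out for Theorem \ref{angle} (used with the endpoint matching from Lemma \ref{interv}). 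Under the hypothesis $\V(f) = \V(M^{\alpha}f)$ each summand must vanish, and since the endpoint contributions to $\V_{\overline{I_j^{\alpha}}}$ agree for $f$ and $M^{\alpha}f$, this is equivalent to $\V_{I_j^{\alpha}}(f) = \V_{I_j^{\alpha}}(M^{\alpha}f)$ as claimed.

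The main obstacle I anticipate is making the decomposition of Step 1 fully rigorous, in particular ensuring that no variation is ``lost'' at the (possibly dense) collection of endpoints $\{l_j,r_j\}_j$ and that the supremum-over-partitions exchanges correctly with the countable sum. This is handled cleanly because the endpoints lie in $K$, where $f$ and $M^{\alpha}f$ coincide, so the argument reduces to a standard truncation and limiting procedure.
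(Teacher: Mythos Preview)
Your overall strategy---decompose $\V(g)$ into interval contributions plus a remainder that agrees for $f$ and $M^{\alpha}f$, then use the per-interval inequality $\V_{\overline{I_j^{\alpha}}}(M^{\alpha}f)\le \V_{\overline{I_j^{\alpha}}}(f)$ and nonnegativity to force equality---is sound and cleaner in spirit than the paper's $\varepsilon$-approximation argument. However, the decomposition identity $\V(g) = \sum_j \V_{\overline{I_j^{\alpha}}}(g) + \V_K(g)$ is \emph{false} as stated: only the inequality $\le$ holds. Your refinement argument for ``$\le$'' is correct, but the ``merging'' argument for ``$\ge$'' breaks down because a partition of $K$ is allowed to jump across an entire $I_j^{\alpha}$, and that edge is already accounted for inside $\V_{\overline{I_j^{\alpha}}}(g)$; summing both double-counts. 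Concretely, take $E_{\alpha}=(0,1)\cup(1,2)$, so that $K\supset\{0,1,2\}$, and any $g$ with $g(0)=g(2)=0$, $g(1)=1$: then $\V_{[0,1]}(g)+\V_{[1,2]}(g)+\V_K(g)\ge 1+1+2=4>2=\V(g)$. Since you need the exact equality (not just $\le$) to subtract and conclude, this is a genuine gap.

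The fix is to replace $\V_K(g)$ by $\sum_k \V_{J_k}(g)$, where $\{J_k\}$ are the connected components of $K$; then $\{\overline{I_j^{\alpha}}\}\cup\{J_k\}$ is a tiling of $\R$ by closed intervals overlapping only at endpoints, and the decomposition becomes the countable additivity of the total variation measure $|\mathrm{d}g|$ (with some care about jump contributions at endpoints, which in any case match for $f$ and $M^{\alpha}f$ by Lemma~\ref{interv}). Since $f=M^{\alpha}f$ on each $J_k\subset K$, the remainder still cancels and your conclusion follows. The paper takes a different route: it never asserts an exact decomposition, but instead works with finite partitions that $\varepsilon$-approximate $\V(f)$ and $\V(M^{\alpha}f)$ simultaneously, refines them by the endpoints $l_j,r_j$ and the V-shape minima $c_j$ of the finitely many relevant intervals, and compares sums up to $O(\varepsilon)$. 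That approach is more elementary (no measure theory, no countable-tiling subtleties) but less transparent than your corrected decomposition.
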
 

\begin{proof} Let $\mathcal{P}, \mathcal{Q}$ be two finite partitions of $\R$ such that 
\begin{equation}\label{partt}
\begin{cases} 
 \V(M^{\alpha}f) & \le \V_{\mathcal{P}} (M^{\alpha}f) + \frac{\varepsilon}{20},\cr 
 \V(f) & \le \V_{\mathcal{Q}}(f) + \frac{\varepsilon}{20}. \cr
\end{cases}
\end{equation}
Now let the mutual refinement of those be $\mathcal{S} = \mathcal{P} \cup \mathcal{Q}.$ We consider the intersection $\mathcal{S} \cap E_{\alpha}:$ if the finite set $A:= \{ i\colon I_j^{\alpha} \cap \mathcal{S} \ne \emptyset\}$ satisfies that
\begin{equation}\label{finvar} \sum_{j \in A} \V_{I_j^{\alpha}} (f) \ge \sum_{j \in \N}\V_{I_j^{\alpha}}(f) - \frac{\varepsilon}{20},
\end{equation}
then keep the partition as it is before advancing. If not, then add to $\mathcal{S}$ finitely many points, all of them contained in intervals of the form $\overline{I_j^{\alpha}}$, such that inequality \ref{finvar} holds. Call this
new partition $\mathcal{S}$ again, as it still satisfies the inequalities \ref{partt}. \\

We finally add some other points to the partition $\mathcal{S}:$ If $j \not\in A,$ do not add any points from the interval. If $j \in A,$ then do the following: 
\begin{enumerate}
 \item As $f = M^{\alpha}f$ on the boundary of an interval $I_j^{\alpha}$, we add to the collection both endpoints $r(I_j^{\alpha}),l(I_j^{\alpha}).$  
 \item If $M^{\alpha}f$ is V shaped over the interval $I_j^{\alpha}$, then there is a point $c_j$ such that $M^{\alpha}f$ is non-increasing on $(l_j,c_j)$ and non-decreasing on $(c_j,r_j).$  Add such a point to our partition.
 \item If $\V_{I_j^{\alpha}}(f) > \V_{ \{x_i \in \mathcal{S} \colon x_i \in I_j^{\alpha}\}} (f) + \frac{\varepsilon}{2^{20|j|}},$ then add finitely many points to the partition to make the reverse inequality hold (here, $\V_{\{x_i \in \mathcal{S} \colon x_i \in A\}}(g)$ stands for 
 the variation along the finite partition composed solely by elements in the set $A$). 
 \end{enumerate}
It is easy to see that, if we denote by $\mathcal{S}'$ the partition obtained by the prescribed procedure above, then, as $\V(f) = \V(M^{\alpha}f)$ and $f = M^{\alpha}f$ on $\R \backslash E_{\alpha},$
\[
|\V_{\mathcal{S}'\cap E_{\alpha}}(f) - \V_{\mathcal{S}'\cap E_{\alpha}}(M^{\alpha}f)| \le 2\varepsilon,
\]
which then implies that, by the considerations above, 
\begin{align}
\sum_{j \in \Z} \V_{I_j^{\alpha}}(f) -  \frac{\varepsilon}{4} & \le \sum_{j \in A}\V_{I_j^{\alpha}}(f) \cr 
							      & \le \sum_{j \in A}\V_{ \{x_i \in \mathcal{S}' \colon x_i \in I_j^{\alpha}\}}(f) + \varepsilon\cr
							      & \le \sum_{j \in A}\V_{ \{x_i \in \mathcal{S}' \colon x_i \in I_j^{\alpha}\}}(M^{\alpha}f) + 3\varepsilon \cr 
							      & \le \sum_{j \in \Z}\V_{I_j^{\alpha}}(M^{\alpha}f) + 3 \varepsilon.\cr
\end{align}
As $\varepsilon$ was arbitrary, comparing the first and last terms above and looking back to our proof that in each of the $I_j^{\alpha}$ the variation of $f$ controls that of the maximal function, we conclude that, for each $j \in \Z,$ 
\begin{equation}\label{localpart}
\V_{I_j^{\alpha}}(f) = \V_{I_j^{\alpha}}(M^{\alpha}f).
\end{equation}
This finishes the proof of this claim. 
\end{proof} 

\begin{claim}\label{monot1} Let $f, I_j^{\alpha}$ as above. Then $f$ and $M^{\alpha}f$ are monotone in $I_j^{\alpha}$. 
\end{claim}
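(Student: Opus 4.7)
The plan is to exploit the local variation identity $\V_{I_j^\alpha}(f) = \V_{I_j^\alpha}(M^\alpha f)$ just established in Claim \ref{inteq}, combined with the structural dichotomy from the previous claim and the endpoint attachment $M^\alpha f = f$ at the endpoints $l_j, r_j$ provided by Lemma \ref{interv}.

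First I would rule out the V-shaped alternative for $M^\alpha f$ on $I_j^\alpha = (l_j, r_j)$. Suppose for contradiction that $M^\alpha f$ is V-shaped with strict minimum $c_j \in I_j^\alpha$. Since $c_j \in E_\alpha$, we have $M^\alpha f(c_j) > f(c_j)$; and by Lemma \ref{interv} combined with the strict gap $M^\alpha f(c_j) < \min\{M^\alpha f(l_j), M^\alpha f(r_j)\}$ asserted in the previous claim, we obtain $f(c_j) < f(l_j)$ and $f(c_j) < f(r_j)$. Using $M^\alpha f(l_j) = f(l_j)$ and $M^\alpha f(r_j) = f(r_j)$, this yields
\begin{align*}
\V_{I_j^\alpha}(M^\alpha f) &= \bigl(f(l_j) - M^\alpha f(c_j)\bigr) + \bigl(f(r_j) - M^\alpha f(c_j)\bigr) \\
&< \bigl(f(l_j) - f(c_j)\bigr) + \bigl(f(r_j) - f(c_j)\bigr) \leq \V_{I_j^\alpha}(f),
\end{align*}
contradicting Claim \ref{inteq}. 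Hence $M^\alpha f$ must be monotone on $I_j^\alpha$.

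Assume without loss of generality that $M^\alpha f$ is non-decreasing on $I_j^\alpha$. Then by Lemma \ref{interv},
$$\V_{I_j^\alpha}(M^\alpha f) = M^\alpha f(r_j) - M^\alpha f(l_j) = f(r_j) - f(l_j),$$
and Claim \ref{inteq} identifies this quantity with $\V_{I_j^\alpha}(f)$. The conclusion follows from standard BV rigidity: for any $l_j < x < y < r_j$, the triangle inequality yields
$$f(r_j) - f(l_j) \leq |f(x) - f(l_j)| + |f(y) - f(x)| + |f(r_j) - f(y)| \leq \V_{I_j^\alpha}(f) = f(r_j) - f(l_j),$$
so equality forces the three consecutive differences to share a common sign; since their sum is non-negative, each must be non-negative. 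Thus $f(l_j) \leq f(x) \leq f(y) \leq f(r_j)$, and $f$ is non-decreasing on $I_j^\alpha$.

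The main obstacle is simply keeping track of the strict inequality in the V-shape step, which hinges on combining the strict detachment $M^\alpha f(c_j) > f(c_j)$ with the strict endpoint gap from the preceding claim; once the V-shape case is eliminated, the monotonicity of $f$ reduces to the elementary rigidity of the triangle inequality for BV functions, so no further difficulty is expected.
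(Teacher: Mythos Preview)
Your proof is correct and follows essentially the same approach as the paper's: rule out the V-shape for $M^\alpha f$ via the strict detachment $M^\alpha f(c_j)>f(c_j)$ forcing $\V_{I_j^\alpha}(M^\alpha f)<\V_{I_j^\alpha}(f)$, then use the variation identity $\V_{I_j^\alpha}(f)=f(r_j)-f(l_j)$ together with triangle-inequality rigidity to conclude monotonicity of $f$. The paper merely sketches both steps (``it is easy to see \ldots\ We skip the details''), so your write-up is in fact a fuller version of the same argument.
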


\begin{proof} Suppose first that $M^{\alpha}f$ is \emph{not} monotone there. Then it must be V shaped on $I_j^{\alpha}$, and then, by Claim \ref{inteq}, we see that the only possibility for that to happen is if $M^{\alpha}f(c_j) = f(c_j), \,c_j \in I_j^{\alpha}.$ 
This is clearly not possible by the definition of $I_j^{\alpha},$ and we reach a contradition. \\ 

Suppose now that $f$ is \emph{not} monotone over $I_j^{\alpha}.$ As $\V_{I_j^{\alpha}}(f) = \V_{I_j^{\alpha}}(M^{\alpha}f)$ by Claim \ref{inteq}, and $\V_{I_j^{\alpha}}(M^{\alpha}f) = f(r_j) - f(l_j),$ then it is easy to see that, 
no matter what configuration of non-monotonicity we have, it yields a contradiction with the equality for the variations over the interval $I_j^{\alpha}.$ We skip the details, for they are routinely verified.
\end{proof}

\begin{rmk}\label{ceta} Note that this last claim proves also that, if $I_j^{\alpha}$ is \emph{bounded}, $f$ is \emph{non-decreasing} over it and $l_j$ is its left endpoint, then $f(l_j-) \le f(l_j+),$ as 
otherwise we would arrive at a contradiction with the fact that $\V_{I_j^{\alpha}}(f) = \V_{I_j^{\alpha}}(M^{\alpha}f).$ An analogous statement holds for the right endpoint, and analogous conclusions
if $f$ is \emph{non-increasing} instead of non-decreasing over the interval. \\ 
\end{rmk}
Next, we suppose without loss of generality that the function $f$ is non-decreasing on $I_j^{\alpha},$ as the other case is completely analogous. 

\begin{claim}\label{infty} Such an $f$ is, in fact, non-decreasing on $(-\infty,r(I_j^{\alpha})].$ 
\end{claim}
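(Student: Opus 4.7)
We argue by contradiction. Suppose $f$ is non-decreasing on $I_j^\alpha$ but fails to be non-decreasing on $(-\infty,r_j]$. Using Remark~\ref{ceta} to control the boundary at $l_j$ and the hypothesis of monotonicity on $(l_j,r_j)$, we reduce to finding points $a<b\le l_j$ with $f(a)>f(b)$. The plan is to analyze the structure of $E_\alpha$ in a left neighborhood of $l_j$ and derive a contradiction with Lemma~\ref{interv}, which asserts $M^\alpha f(l_j)=f(l_j)$.

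First, I would rule out the configuration of an adjacent component $I_k^\alpha=(l_k,l_j)$ on which $f$ is non-increasing. By Claim~\ref{monot1} this is the only alternative to $f$ being non-decreasing on $I_k^\alpha$; in the non-increasing case one gets a local minimum of $f$ at $l_j$. Considering the admissible off-centered ball with $y=l_j-\alpha t$ and radius $t$, so that $[y-t,y+t]=[l_j-(1+\alpha)t,\,l_j+(1-\alpha)t]$, and combining the lower bound
\[
u(y,t)\ge \frac{(1+\alpha)f(l_j-)+(1-\alpha)f(l_j+)}{2}
\]
with the explicit value of $f(l_j)$ given by $NORM(\alpha)$ yields $u(y,t)\ge f(l_j)$, with strict inequality once $t$ is chosen large enough to detect the non-trivial monotonicity of $f$ on $I_k^\alpha$ or on $I_j^\alpha$. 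This contradicts $M^\alpha f(l_j)=f(l_j)$. Hence $f$ must be non-decreasing on $I_k^\alpha$, and we recursively apply the same argument to $I_k^\alpha$ in place of $I_j^\alpha$, propagating the non-decreasing behavior of $f$ leftward through each successive component.

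Second, if instead a left-neighborhood $(l_j-\delta,l_j)$ of $l_j$ lies in $E_\alpha^c$, then $f=M^\alpha f$ on this neighborhood and, by continuity of $M^\alpha f$ at $l_j$, $f(x)\to f(l_j)$ as $x\nearrow l_j$. Any peak $f(x_0)>f(l_j)$ for $x_0<l_j-\delta$ would then, via the centered admissible ball $u(x,\,x-x_0)=\tfrac{1}{2(x-x_0)}\int_{x_0}^{2x-x_0}f$ for $x\nearrow l_j$, yield $u(x,\,x-x_0)>f(x)$: the mass contributed by the peak forces an additive excess over $f(x)\approx f(l_j)$ that persists in the limit, since the ball-length $2(l_j-x_0)$ is fixed while $f\ge 0$ prevents cancellations. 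This places such $x$ into $E_\alpha$, contradicting $(l_j-\delta,l_j)\subset E_\alpha^c$ and reducing to the previous case.

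The main obstacle will be the rigorous control of the averages in the second case when $f$ has significant downward excursions between $x_0$ and $l_j$, which dilute the naive ``peak mass'' lower bound. To handle this, one must invoke the global extremality $\V(f)=\V(M^\alpha f)$ together with Claim~\ref{inteq} and Claim~\ref{monot1} to restrict the possible behavior of $f$ on $E_\alpha^c$, and use the geometric lemmas of Section~3.1 — in particular Lemma~\ref{square} and the boundary projection Lemma~\ref{BPL}, whose applicability here relies on $\alpha\ge 1/3$ — to locate admissible averaging configurations $(y,t)$ with $|y-l_j|\le\alpha t$ that isolate the contribution of the peak at $x_0$ while keeping $y$ arbitrarily close to $l_j$.
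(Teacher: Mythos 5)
Your proposal takes a genuinely different route from the paper's, and it has gaps that I do not think can be closed along the lines you sketch. The paper does not propagate monotonicity leftward component-by-component. It sets $a_j=\inf\{t\colon f \text{ is non-decreasing on } [t,r_j]\}$, assumes $a_j>-\infty$, and then (Subclaims \ref{sclm1} and \ref{minm}) produces a point $d$ in a slightly enlarged interval $(b_j,r_j)$ at which $\min(f(d-),f(d+))$ attains the minimum of $f$ over $(b_j,r_j)$, $f(d-)=f(d+)$, and $M^{\alpha}f(d)=f(d)$; the contradiction is then the elementary chain $f(d)=M^{\alpha}f(d)\ge Mf(d)\ge \dashint_{d-\delta}^{d+\delta}f>f(d)$ for small $\delta$, because $d$ is an interior minimum near which $f$ is genuinely non-constant. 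By contrast, your dichotomy at $l_j$ (either an adjacent component $(l_k,l_j)$ of $E_{\alpha}$, or a full left-neighborhood inside $E_{\alpha}^c$) is not exhaustive: the components of $E_{\alpha}$ may accumulate at $l_j$ from the left, and even when the dichotomy holds, the recursion through ``successive components'' need not terminate or exhaust $(-\infty,l_j]$.

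Two of your estimates also fail as stated. In your first case, the ball $[\,l_j-(1+\alpha)t,\;l_j+(1-\alpha)t\,]$ gives $u(y,t)\ge \tfrac{1}{2}\bigl((1+\alpha)f(l_j-)+(1-\alpha)f(l_j+)\bigr)$, but $NORM(\alpha)$ assigns $f(l_j)=\tfrac{1}{2}\bigl((1+\alpha)\limsup_{y\to l_j}f+(1-\alpha)\liminf_{y\to l_j}f\bigr)$, and Remark \ref{ceta} forces $f(l_j-)\le f(l_j+)$ at the left endpoint of a non-decreasing component; so your lower bound puts the weight $(1+\alpha)$ on the smaller one-sided limit and does not yield $u(y,t)\ge f(l_j)$. (The repair is to apply Remark \ref{ceta} from both sides to get $f(l_j-)=f(l_j+)$ and then use a \emph{centered} average --- which is exactly the mechanism the paper exploits at its point $d$.) In your second case, a failure of monotonicity on $(-\infty,l_j]$ need not produce any peak with $f(x_0)>f(l_j)$: the non-monotone excursion can lie entirely below $f(l_j)$ and far to the left, where balls anchored near $l_j$ detect nothing; and you yourself concede that the averaging bound is uncontrolled in the presence of downward excursions. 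Invoking Lemma \ref{square} and Lemma \ref{BPL} does not fill this hole --- those lemmas serve the maxima analysis of Section 3.1, whereas the contradiction here must come from a centered average at a carefully chosen minimum point where $f$ is continuous and attached to $M^{\alpha}f$.
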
 

\begin{proof}Our proof of this fact will go by contradiction: \\ 

First, let $a_j = \inf\{t \in \R; f \text{ is non-decreasing in } [t,r(I_j^{\alpha})]\},$ and define $b_j<a_j$ such that the minimum of $f$ in $[b_j,r_j]$ happens \emph{inside} $(b_j,r_j).$ Of course, such a minimum need not happen at a point, but it surely does happen at a \emph{lateral limit} of a point. 

\begin{subclaim}\label{sclm1} $M^{\alpha}f(a_j) = f(a_j)$ and $f(a_j-) = f(a_j+).$  
\end{subclaim} 
\begin{proof} If $M^{\alpha}f(a_j)>f(a_j)$, then there exists an open interval $E_{\alpha} \supset J'_j \ni a_j$, and, as we proved before, $f$ must be monotone in such an interval. By the definition of $a_j$, $f$ must be 
non-decreasing, which is a contradiction to the definition of $a_j.$  \\ 
 
Now for the second equality: if it were not true, then $a_j$ would be, again, one of the endpoints of a maximal interval $J_j \subset E_{\alpha}.$ If $a_j$ is the left-endpoint, then it means that 
$f(a_j-) > f(a_j+).$ But this is a contradiction, as $f$ then must be non-decreasing on $J_j$, and therefore we would again have that $\V_{J_j}(f) > \V_{J_j}(M^{\alpha}f).$ Therefore, $a_j$ is the right endpoint, and also $f(a_j-) < f(a_j+).$ 
At the present moment an analysis as in Remark \ref{ceta} is already available, and thus we conclude that $f$ shall be non-decreasing on $J_j$, which is again a contradiction to the definition of $J_j.$  
\end{proof} 
We must prove yet another fact that will help us: 
\begin{subclaim}\label{minm} Let 
\[
\mathcal{D} = \{x \in (b_j,r_j)\colon \min(f(x-),f(x+)) \text{ attains the minimum in } (b_j,r_j)\}.
\] 
Then there exists $d \in \mathcal{D}$ such that $f(d-) = f(d+)$ and $M^{\alpha}f(d) = f(d).$
\end{subclaim} 
\begin{proof} If $a_j \in \mathcal{D},$ then our assertion is proved by Subclaim \ref{sclm1}. If not, then $\mathcal{D} \subset (b_j,a_j).$ In this case, pick any point $d_0$ in this intersection. \\ 

\noindent \textit{Case 1: $f(d_0+) = f(d_0-)$.} In this case, there is nothing left undone if $f(d_0) = M^{\alpha}f(d_0).$ Otherwise, we would have that $M^{\alpha}f(d_0) > f(d_0),$ and then there would be an interval $E_{\alpha} \supset J_0 \ni d_0.$ By the fact that \emph{all} the points in $\mathcal{D}$
must lie in $(b_j,a_j),$ and that $f$ is monotone on $J_0$, we see automatically that either  $f(b_j) \le f(d_0),$ a contradiction, or the right endpoint of $J_0$ satisfies $f(r(J_0)) \le f(d_0).$ By the definition of $d_0,$ this inequality has to be an equality, and 
also $f$ must be continuous at $r(J_0)$, by the argument of Remark \ref{ceta}. As an endpoint of a maximal interval $J_0 \subset E_{\alpha},$ we have then $M^{\alpha}f(r(J_0)) = f(r(J_0)).$  \\ 

\noindent \textit{Case 2: $f(d_0+) > f(d_0-)$.} It is easy to see that, in this case, there is an open interval $J \subset E_{\alpha}$ such that either $J \ni d_0$ or $d_0$ is its right endpoint. In either case, we see that $f$ must be non-decreasing over this interval 
$J$, and let again $l_0$ be its left endpoint. As we know, $l_0 \in \mathcal{D}$ again, $l_0 \in (b_j,r_j)$ and, by  Remark \ref{ceta}, we must have that $f(l_0-) = f(l_0+).$ Of course, by being the endpoint we have automatically again that $M^{\alpha}f(l_0) = f(l_0).$ This concludes again this case, and therefore the proof of the subclaim. 
\end{proof} 
The concluding argument for the proof of the Claim \ref{infty} goes as follows: let $d$ be the point from Subclaim \ref{minm}. Then we must have that
\[
f(d) = M^{\alpha}f(d) \ge Mf(d) \ge \dashint_{d-\delta}^{d+\delta} f.
\]
For small $\delta$, it is easy to get a contradiction from that. Indeed, by the properties of the interval $(b_j,r_j]$ one can ensure that it is only needed to analyze 
$\delta \le |d-b_j|.$ The details are omitted. \\ 

This contradiction came from the fact that we supposed that $a_j > -\infty,$ and our claim is established. 
\end{proof} 

Now we finish the proof: If $M^{\alpha}f \le f$ always, we get to the case of a \emph{superharmonic function}, i.e., a function which satisfies $\dashint_{x-r}^{x+r} f(s) \mmd s \le f(x)$ for all $r > 0.$ That is going to be handled in a while. If not, then we analyze the detachment set: 
\begin{enumerate} 
 \item If all intervals in the detachment set are \emph{of one single type}, that is, either all \emph{non-increasing} or all \emph{non-decreasing}, our function must then admit a point $x_0$ such that $f$ is either non-decreasing on $(-\infty,x_0]$ (resp. non-decreasing on $[x_0,+\infty),$) and $f = M^{\alpha}f$ on $(x_0,+\infty)$ (resp. on $(-\infty,x_0)$).
 \item If there is at least one interval of each type, then we must have an interval $[R,S]$ such that 
 \begin{itemize}
  \item $f$ is non-decreasing on $(-\infty,R]$; 
  \item $f$ is non-increasing on $[S,+\infty)$; 
  \item $f = M^{\alpha}f$ on $(R,S).$ 
 \end{itemize}
\end{enumerate}

The analysis is then easily completed for every one of the cases above: If $f = M^{\alpha}f$ over an interval, then, as $M^{\alpha}f \ge Mf$, we conclude that $f$ must be superharmonic there, where by ``locally subharmonic" we mean a function that satisfies $f(x) \ge \dashint_{x-r}^{x+r}f(s) \mmd s$ for all $0\le s \ll_{x} 1$. As superharmonic in one dimension coincides with concave, and concave functions have \emph{at most} one global maximum, then the first case above gives that $f$ is either monotone or has exaclty one point $x_1$ 
such that it is exactly non-decreasing until a point $x_1,$ non-increasing after. The case of monotone functions is easily ruled out, as if $\lim_{x \to \infty} f = L, \, \lim_{x \to -\infty} f = M \Rightarrow \V(f) = |M-L|, \V(M^{\alpha}f) \le \frac{|M-L|}{2}.$ The second case is treated in the exact same fashion, and the result is the same: in the end, the only 
possible extremizers for this problem are functions $f$ such that there is a point $x_1$ such that $f$ is non-decreasing on $(-\infty,x_1),$ and $f$ is non-increasing on $(x_1,+\infty).$ The theorem is then complete. 

\subsection{Proof of Theorem \ref{counterex}}

We start our discussion by pointing out that the measure $\mmd \mu = \delta_0 + \delta_1$ satisfies our Theorem. 

\begin{prop} Let $0 \le \alpha < \frac{1}{3}.$ Then 
$$+\infty = M^{\alpha}\mu(0) > M^{\alpha}\mu\left(\frac{1}{3}\right) < M^{\alpha}\mu\left(\frac{1}{2}\right) > M^{\alpha}\mu\left(\frac{2}{3}\right).$$
That is, $M^{\alpha}\mu$ has a nontrivial local maximum. 
\end{prop}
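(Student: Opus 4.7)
The plan is to compute $M^{\alpha}\mu(x)$ pointwise at each of the four points $0, 1/3, 1/2, 2/3$ by viewing
\[
M^{\alpha}\mu(x) = \sup_{(y,t):\ |x-y|\le \alpha t}\ \frac{\mu([y-t,y+t])}{2t}
\]
as a constrained optimization over admissible pairs $(y,t)$. Since $\mu = \delta_0 + \delta_1$, the numerator equals $0$, $1$, or $2$ depending on whether $[y-t,y+t]$ contains neither, one, or both of $\{0,1\}$. The value at $x = 0$ is immediate: the admissible pair $(y,t) = (0,\varepsilon)$ gives $1/(2\varepsilon)$, forcing $M^{\alpha}\mu(0) = +\infty$. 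The remaining three points will then be handled by the same trichotomy of subcases.

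For $x = 1/2$, the symmetric choice $(y,t) = (1/2, 1/2)$ is admissible and attains $\mu([0,1])/1 = 2$. To match this with an upper bound, I would observe that any ball covering both atoms has $t \ge 1/2$, hence ratio $\le 2$; while any ball covering only $\delta_0$ satisfies $y \le t$ together with $|y - 1/2| \le \alpha t$, forcing $t \ge 1/(2(1+\alpha))$ and ratio $\le 1 + \alpha < 2$. A symmetric bound holds for $\delta_1$. Hence $M^{\alpha}\mu(1/2) = 2$.

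For $x = 1/3$, the same trichotomy yields: balls covering both atoms force $t \ge 2/(3(1+\alpha))$, with equality attainable, producing ratio $3(1+\alpha)/2$; balls covering only $\delta_0$ force $t \ge 1/(3(1+\alpha))$, producing the same ratio $3(1+\alpha)/2$; and the crucial third subcase---balls covering only $\delta_1$---requires $y > t$, $y + t \ge 1$, $|y-1/3| \le \alpha t$ simultaneously, and a direct feasibility analysis shows this system is infeasible precisely when $\alpha < 1/3$. Combining the three subcases gives $M^{\alpha}\mu(1/3) = 3(1+\alpha)/2 < 2$, and by the reflection $x \mapsto 1 - x$ that preserves $\mu$ we also obtain $M^{\alpha}\mu(2/3) = M^{\alpha}\mu(1/3) < 2$. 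Chaining the four values yields the proposition.

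The main obstacle, and the geometric reason why $1/3$ appears as the threshold in Theorem \ref{angle}, is the feasibility analysis for the ``only $\delta_1$'' branch at $x = 1/3$: the constraints $y > t$, $y + t \ge 1$, $y \le 1/3 + \alpha t$ form a tight system whose solvability is equivalent to $\alpha \ge 1/3$. Everything else reduces to elementary single-variable linear optimization, together with the verification that the claimed extremizers lie in the feasible region for each parameter regime.
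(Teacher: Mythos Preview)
Your proposal is correct and follows essentially the same approach as the paper: compute the maximal function at $1/3$ (and $2/3$ by symmetry) exactly, then compare with the value at $1/2$. The paper compresses your trichotomy into the phrase ``a simple calculation then shows that $M^{\alpha}\mu(1/3)=\tfrac{3(\alpha+1)}{2}$ if $\alpha<1/3$,'' and takes one minor shortcut you do not: rather than computing $M^{\alpha}\mu(1/2)$ exactly, it simply invokes $M^{\alpha}\mu(1/2)\ge M\mu(1/2)=2$, which already suffices since $2>\tfrac{3(1+\alpha)}{2}$ exactly when $\alpha<1/3$.
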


\begin{proof} By the symmetries of our measure, $M^{\alpha}\mu \left(\frac{1}{3}\right) = M^{\alpha}\mu\left(\frac{2}{3}\right).$ A simple calculation then shows that $M^{\alpha}f\left(\frac{1}{3}\right) = \frac{3(\alpha+1)}{2},$ if $\alpha < \frac{1}{3}.$ 
As $M^{\alpha}\mu \left(\frac{1}{2}\right) \ge M\mu\left(\frac{1}{2}\right) = 2 > \frac{3 \alpha + 3}{2} \iff \alpha < \frac{1}{3},$ we are done with the proof of this proposition. 
\end{proof}

Before proving our Theorem, we mention that our choice of $\frac{1}{3},\frac{1}{2},\frac{2}{3}$ was not random: $\frac{1}{2}$ is actually a \emph{local maximum} of $M^{\alpha}\mu$, while $\frac{1}{3},\frac{2}{3}$ are \emph{local minima}.

\begin{proof}[Proof of Theorem \ref{counterex}] Let $f_n(x) = n (\chi_{[0,\frac{1}{n}]} + \chi_{[1-\frac{1}{n},1]}).$ It is easy to see that $\int g f_n \mmd x \to \int g \mmd \mu(x),$ for each $g \in L^{\infty}(\R)$ that is continuous on $[0,t_0) \cup (t_1,1],$ for some $t_0 < t_1.$ \\

We prove that $M^{\alpha}f_n (x) \to M^{\alpha}\mu (x), \;\forall x \in [0,1].$ This is clearly enough to conclude our Theorem, as then, if we fix $\alpha < \frac{1}{3},$ there will be $n(\alpha) > 0$ such that, for $N \ge n(\alpha),$ 
$$0=f_N\left(\frac{1}{3}\right) <M^{\alpha}f_N \left(\frac{1}{3}\right) < M^{\alpha}f_N\left(\frac{1}{2}\right) > M^{\alpha}f_N\left(\frac{2}{3}\right) > f_N\left(\frac{2}{3}\right) = 0.$$ 
To prove convergence, we argue in two steps. \\ 

The first step is to prove that $\liminf_{n \to +\infty} M^{\alpha}f_n (x) \ge M^{\alpha}\mu (x).$ It clearly holds for $x \in \{0,1\}.$ For $x \in (0,1),$ we see that 
$$M^{\alpha}f_n (x) = \sup_{|x-y|\le \alpha t \le 3 \alpha} \frac{1}{2t}\int_{y-t}^{y+t} f_n(s) \mmd s.$$
But then 
\begin{align*}
M^{\alpha}\mu (x) &=\sup_{|x-y|\le \alpha t \le 3 \alpha} \frac{1}{2t}\int_{y-t}^{y+t}\mmd \mu (s) \cr 
& = \sup_{|x-y|\le \alpha t \le 3 \alpha; t\ge\delta(x)>0} \lim_{n\to \infty} \frac{1}{2t}\int_{y-t}^{y+t} f_n(s) \mmd s \cr 
& \le \liminf_{n \to \infty} M^{\alpha}f_n(x),
\end{align*}
where $\delta(x)>0$ is a multiple of the minimum of the distances of $x$ to either 1 or 0. This completes this part. \\

The second step is to establish that, for every $\varepsilon > 0,$  $\; (1+\varepsilon)M^{\alpha}\mu (x) \ge \limsup_{N \to \infty} M^{\alpha}f_N(x).$  This readily implies the result. \\

To do so, notice that, as $1>x >0,$ then for $N$ sufficiently large, the average that realizes the supremum on the definition of $M^{\alpha}$ has a positive radius bounded bellow and above in $N$. Specifically, we have that
$$M^{\alpha}f_N(x) = \dashint_{y_N-t_N}^{y_N+t_N} f_N(s) \mmd s, \;\; \Delta(x) \ge t_N \ge \delta(x) >0.$$ 
This shows also that $\{y_N\}$ and $\{t_N\}$ must be bounded sequences. Therefore, using compactness, 
\begin{align*}
\limsup_{N \to \infty} M^{\alpha}f_N(x)& = \limsup_{N \to \infty} \dashint_{y_N-t_N}^{y_N+t_N} f(s) \mmd s \cr 
			               & = \lim_{k \to \infty} \dashint_{y_{N_k} - t_{N_k}}^{y_{N_k} + t_{N_k}} f(s) \mmd s \cr
				       & \le (1+\eta)\frac{1}{2t} \limsup_{N \to \infty} \int_{y-(1+\varepsilon/2) t}^{y+(1+\varepsilon/2) t} f_N(s) \mmd s \cr
				       & = (1+\eta)(1+\varepsilon/2) \dashint_{y-(1+\varepsilon/2)t}^{y +(1+\varepsilon/2)t} \mmd \mu (s) \cr 
& \le (1+\varepsilon) M^{\alpha}\mu(x), \cr
\end{align*}
where we assume that the sequence $\{n_k\}$ is suitably chosen so that the convergence requirements all hold. 
If we make $N$ sufficiently large, and take $\eta$ depending on $\varepsilon$ such that $(1+\eta)(1+\varepsilon/2) < 1+\varepsilon,$ we are done with the second part.  
\end{proof}

\section{Proof of Theorems \ref{lip} and \ref{lipcont}} The idea for this proof is basically the same as before: analyze local maxima in the detachment set in this Lipschitz case, proving that the maximal function is either V shaped or monotone in its composing intervals, 
\emph{if} the Lipschitz constant into consideration is less than  $\frac{1}{2}$. The endpoint case is done by approximation, and we comment on how to do it later. By the end, we sketch on how to build the mentioned counterexamples. 

\subsection{Analysis of maxima of $M^1_N$ for $\text{Lip}(N)< \frac{1}{2}$} Let first $(a,b)$ be an interval on the real line, such that there exists a point $x_0$, maximum of $M^1_Nf$ over $(a,b)$, with the property that 
\[
M^1_Nf(x_0) > \max\{M^1_Nf(a),M^1_Nf(b)\}.
\]
Therefore, we wish to prove that, for some point in $(a,b)$, $M^1_Nf = f.$ We begin with the general strategy: let us suppose that this is not the case. 
Then there must be an average $u(y,t) =\frac{1}{2t} \int_{y-t}^{y+t}|f(s)|\mmd s$ with $N(x_0) \ge t>0$, $|x_0-y| \le t$ and $M^1_Nf(x_0) = u(y,t).$ \\

Now we want to find a neighbourhood of $x_0$ such that there is $R=R(x_0)>0$ such that, for all $x \in I,$ $M^1_{\equiv R}f(x) = M^1_Nf(x_0).$ \\ 

By Lemma \ref{BPL}, we can suppose that either $y = x_0 - t$ or $y=x_0 + t,$ as we can show that $y \in (a,b).$ Without loss of generality, let us assume that $y = x_0-t.$ \\

\noindent\textit{Case (a): $t<N(x_0).$} This is the easiest case, and we rule it out with a simple observation: let $I$ be an interval for which $x_0$ is an endpoint and
such that, for all $x \in I,\, N(x)> t.$ We claim then that, for $x \in I,$ $M^1_{\equiv t+\varepsilon}f(x) = M^1_Nf(x_0),$ if $\varepsilon$ is sufficiently small. Indeed, if $\varepsilon$ 
is sufficiently small, then $M^1_{\equiv t+\varepsilon}f(x) \le M^1_Nf(x) (\le M^1_Nf(x_0))$ for every $x \in I.$ But then we see also that $(x_0-t,t)$ belongs to the region $\{y:|x-y|\le s \le N(x)\},$ as 
then $|(x_0-t)-x|=x+t-x_0 \le t < t+\varepsilon <N(x).$ This shows that 
\[ 
M^1_Nf(x_0) \le \inf_{x \in I} M^1_{\equiv t+\varepsilon}f(x) \le \sup_{x \in I} M^1_{\equiv t+\varepsilon}f(x) \le M^1_Nf(x_0).
\]
As before, we finish this case with \cite[Lemma~3.6]{aldazperezlazaro}, as then it guarantees us that $M^1_{\equiv t+\varepsilon}f(x) = f(x)$ for every point in this interval $I$. \\

\noindent\textit{Case (b): $t=N(x_0).$} In this case, we have to use Lemma \ref{square}. Namely, we wish to include the point $(x_0 - N(x_0),N(x_0))$ in the region 
\[
\{(z,s):|z-x| + |s-N(x)| \le N(x)\},
\]
for $x_0 - \delta < x<x_0$, $\delta$ sufficiently small. 
% We present, to illustrate, two techniques to prove this lemma, each one giving a different insight about \emph{why} the 
% condition $\text{Lip}(N) < \frac{1}{2}$ is, in some sense, optimal. \\

% \noindent\textit{Case (b), first proof.} 

Let then $\varepsilon >0$ and $x$ close to $x_0$ be such that $N(x) \ge N(x_0) - \varepsilon.$ We have already a comparison of the form
\[
M^1_Nf(x) \ge M^1_{\equiv N(x_0) - \varepsilon}f(x).
\]
We want to conclude that there is an interval $I$ such that $M^1_{\equiv N(x_0)-\varepsilon}f$ is constant on $I$. We want then the point $(x_0 - N(x_0),N(x_0))$ to lie on the set 
\[ 
\{(z,s): |z-x| + |s-N(x_0) + \varepsilon| \le N(x_0) - \varepsilon\}.
\]
But this is equivalent to 
\[
x-x_0 + N(x_0) + \varepsilon \le N(x_0) - \varepsilon \iff |x-x_0| \ge 2\varepsilon.
\]
So, we can only afford to to this if $x$ is somewhat not too close to $x_0.$ But, as $\text{Lip}(N) < \frac{1}{2}$ in this case, we see that 
\[ 
|N(x)-N(x_0)| \le \text{Lip}(N)|x-x_0| \Rightarrow N(x) \ge N(x_0) -\text{Lip}(N)|x-x_0| > N(x_0) - \varepsilon \iff 
\]
\[
|x-x_0| \le \frac{1}{\text{Lip}(N)} \varepsilon.
\]
Therefore, we conclude that, on the non-trivial set 
\[
\{x \in \R: \frac{1}{\text{Lip}(N)} \varepsilon \ge |x-x_0| \ge 2\varepsilon\},
\]
it holds that $M^1_Nf(x_0) \ge M^1_Nf(x) \ge M^1_{\equiv N(x_0) - \varepsilon}f(x) \ge M^1_Nf(x_0) \ge M^1_Nf(x).$ By \cite[Lemma~3.6]{aldazperezlazaro}, $M^1_{\equiv N(x_0)-\varepsilon}f(x) = M^1_Nf(x) = f(x).$ 
This concludes the %first 
analysis in this case, and also finishes this part of the section, as the finishing argument here is then the same as the one used in Theorem \ref{angle}, and we therefore omit it.
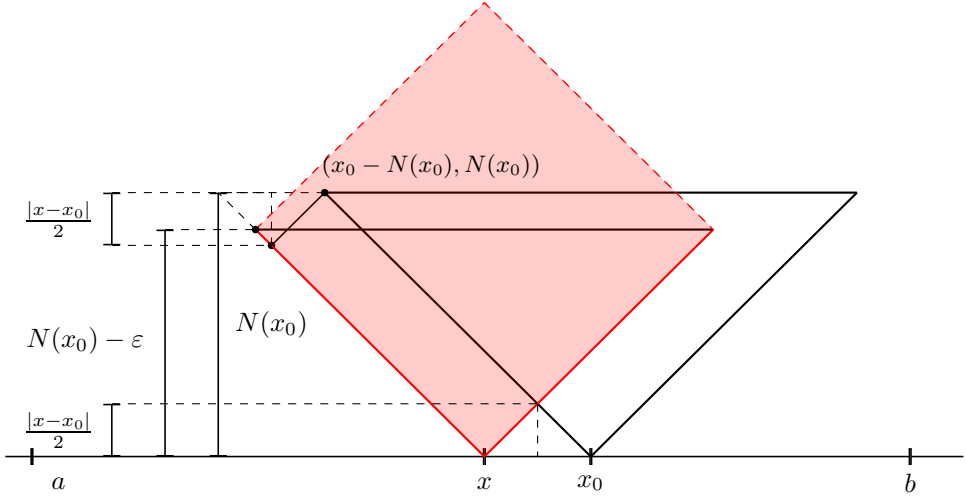
\begin{figure}
\centering
\begin{tikzpicture}[scale=0.7]
\draw[-|,semithick] (-8,0)--(-7.5,0); 
\draw[|-|,semithick] (-7.5,0)--(1,0);
\draw[|-|,semithick] (1,0)--(3,0);
\draw[|-|,semithick] (3,0)--(9,0); 
\draw[|-,semithick] (9,0)--(10,0);
\draw[-,thick] (3,0)--(8,5);
\draw[-,thick] (3,0)--(-2,5);
\draw[-,thick] (-2,5)--(8,5); 
\draw[-,thick, color=red] (1,0)--(5.3,4.3);
\draw[-,thick, color=red] (1,0)--(-3.3,4.3); 
\draw[-,thick] (-3.3,4.3)--(5.3,4.3);
\draw[|-|,semithick] (-5,4.3)--(-5,0);
\draw (-6.5,2.2) node {$N(x_0)-\varepsilon$};
\draw[dashed] (-5,4.3)--(-3.3,4.3);
\draw[|-|,semithick] (-4,5)--(-4,0);
\draw (-3,2.5) node {$N(x_0)$};
\draw[dashed] (2,1)--(2,0);
\draw[-,semithick] (-3,4)--(-2,5);
\draw[dashed] (-3,4)--(-3,5);
\draw[|-|,semithick] (-6,4)--(-6,5);
\draw (-7,4.5) node {$\frac{|x-x_0|}{2}$};
\draw (-3,4) node[circle,fill,inner sep=1pt]{} ;
\draw[dashed] (-6,4)--(-3,4);
\draw[dashed] (-6,5)--(-3,5);
\draw[dashed] (-2,5)--(-4,5); 
\draw[dashed] (-3.3,4.3)--(-4,5);
\draw[dashed, color=red] (-3.3,4.3)--(1,8.6); 
\draw[dashed, color=red] (1,8.6)--(5.3,4.3);
\draw[dashed, draw=red, fill=red, fill opacity=0.2] (1,0) -- (5.3,4.3) -- (1,8.6) -- (-3.3,4.3) -- (1,0);
\draw (3,-0.5) node {$x_0$}; 
\draw (1,-0.5) node {$x$};
\draw (-7,-0.5) node {$a$};
\draw (9,-0.5) node {$b$};
\draw (0,5.5) node {{\small$(x_0 - N(x_0),N(x_0))$}};
\draw[|-|,semithick] (-6,0) -- (-6,1);
\draw (-7,0.5) node {$\frac{|x-x_0|}{2}$}; 
\draw[dashed] (-6,1)--(2,1);
\draw (-2,5) node[circle,fill,inner sep=1pt]{} ;
\draw (-3.3,4.3) node[circle,fill,inner sep=1pt]{};
\end{tikzpicture}
\caption{Illustration of proof of case (b).} 
\end{figure} 
\vspace{1cm}

\subsection{The critical case $\text{Lip}(N)=\frac{1}{2}$} The argument is pretty simple: we build explicitly a suitable sequence of approximations of $N$ such that they all have Lipschitz constants less than $\frac{1}{2}.$ By our already proved results, 
this will give us the result also in this case. \\ 

Explicitly, let $N$ be such that $\text{Lip}(N) = \frac{1}{2}$ and $f \in BV(\R)$. Let then $\mathcal{P} = \{x_1 < \cdots < x_M\}$ be any partition of the real line. Let $J \gg 1$  be a large integer, and divide the interval $[x_1,x_M]$ into $J$ equal parts, that we call $(a_j,b_j)$. Define also the numbers
\[ 
\Delta_j = \frac{N(b_j)-N(a_j)}{b_j-a_j}.
\]
We know, by hypothesis, that $\Delta_j \in [-1/2,1/2].$ Let then $\tilde{\Delta}_j = \frac{1}{2} - \frac{1}{J^3},$ and define the function 
\[
\tilde{N}(x) = \begin{cases} 
                N(x_1), & \text{ if } x \le x_1, \cr
                N(x_1) + \tilde{\Delta}_1(x-x_1), & \text{ if } x \in (a_1,b_1),\cr
                \tilde{N}(b_{j-1}) + \tilde{\Delta}_j (x-b_{j-1}), & \text{ if } x \in (a_j,b_j), \cr
                \tilde{N}(b_J), & \text{ if } x \ge x_N.\cr
               \end{cases}
\]
It is obvious that this function is continuous and Lipschitz with constant $\frac{1}{2} - \frac{1}{J^3}.$ If $x \in [x_1,x_N],$ then 
\[ 
|\tilde{N}(x) - N(x)| = |\tilde{N}(x) - \tilde{N}(x_1) + N(x_1) - N(x)| \le \int_{x_1}^x |\frac{1}{2} - \frac{1}{J^3} - \frac{1}{2}| \mmd t \le \frac{|x_M - x_1|}{J^3}.
\]
We now choose $J$ such that the right hand side above is less than $\delta > 0,$ which is going to be chosen as follows: for the same partition $\mathcal{P},$ we let $\delta > 0$ be such that
\[ 
|\tilde{N}(x_i) - N(x_i)|< \delta \Rightarrow |M^1_{N(x_j)}f(x_j) - M^1_{\tilde{N}(x_j)}f(x_j)| < \frac{\varepsilon}{2M}.
\]
This can, by continuity, always be accomplished. This implies that, using the previous case, 
\[
\V_{\mathcal{P}}(M^1_N f) \le \V_{\mathcal{P}}(M^1_{\tilde{N}}f) + \varepsilon \le \V(M^1_{\tilde{N}}f) + \varepsilon \le \V(f) + \varepsilon.
\]
Taking the supremum over all possible partitions and then taking $\varepsilon \to 0$ finishes also this case, and thus the proof of Theorem \ref{lip}.

\subsection{Counterexample for $\text{Lip}(N) > \frac{1}{2}$}
Finally, we build examples of functions with $\text{Lip}(N) > \frac{1}{2}$ and $f \in BV(\R)$ such that 
\[
\V(M_Nf) = + \infty. 
\]
Fix then $\beta > \frac{1}{2}$ and let a function $N$ with $\text{Lip}(N) = \beta$ be defined as follows: 
\begin{enumerate}
 \item First, let $x_0 = \frac{2}{2\beta+1}.$ Let then $N(0) = 1,\,N(x_0) = \frac{x_0}{2}$ and extend it linearly in $(0,x_0).$ 
 \item Let $x'_K$ be the solution to the equation $\beta x - \beta x_{K-1} + \frac{x_{K-1}}{2} = \frac{x+1}{2}  \iff x'_K = x_{K-1} + \frac{1}{\beta - \frac{1}{2}}.$ 
 \item At last, take $x_K = x'_K + \frac{1}{2\beta +1},$ and define for all $K \ge 1$ $N(x_K) = \frac{x_K}{2},\, N(x'_K) = \frac{x'_K+1}{2} ,$ extending it linearly on $(x_{K-1},x'_K)$ and $(x'_K,x_K).$ 
\end{enumerate}
As $\{x'_K\}_{K \ge 1}$ is an arithmetic progression, we see that 
\[ 
\sum_{K \ge 0} \frac{1}{x'_K} = +\infty.
\]
Moreover, define $f(x) = \chi_{(-1,0)}(x).$ We will show that, for this $N$, we have that 
\[ 
\V(M^1_Nf) = +\infty.
\]
In fact, it is not difficult to see that: 
\begin{enumerate}
 \item \textit{$M^1_Nf(x_K) =0, \, \forall K \ge 0.$} This is due to the fact that the maximal intervals $(y-t,y+t)$ that satisfy $|x_K - y| \le t \le N(x_K)$ are still contained in $[0,+\infty),$ which is of course disjoint from $(-1,0).$  
 \item \textit{$M^1_Nf(x'_K) \ge \frac{1}{x'_K+1}$.} This follows from 
 \[ 
 M^1_Nf(x'_K) \ge \frac{1}{2Nf(x'_K)} \int_{-1}^{x'_K} f(t) \mmd t = \frac{1}{x'_K + 1}.
 \]
\end{enumerate}
This shows that 
\[
\V(M^1_Nf) \ge \sum_{K=0}^{\infty} |M^1_Nf(x'_K) - M^1_Nf(x_K)| = \sum_{K=0}^{\infty} \frac{1}{x'_K + 1} = + \infty.
\]
This construction therefore proves Theorem \ref{lipcont}. 
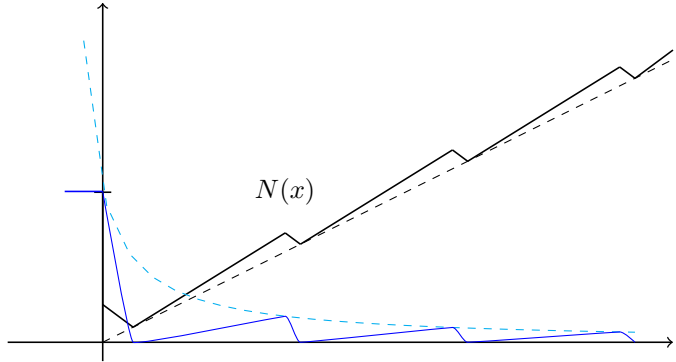
\begin{figure} 
\centering
\begin{tikzpicture}[scale=0.5]
\draw[->,semithick] (0,-0.5) -- (0,9);
\draw[->,semithick] (-2.5,0) -- (15,0); 
\draw[-|,semithick] (0,0) -- (0,4);
\draw[-,semithick] (0,1) -- (4/5,2/5);
\draw[-,semithick] (4/5,2/5) -- (24/5,29/10);
\draw[-,semithick] (24/5,29/10) -- (26/5,13/5);
\draw[-,semithick] (26/5,13/5) -- (46/5, 51/10);
\draw[-,semithick] (46/5, 51/10) -- (48/5, 24/5);
\draw[-,semithick] (48/5,24/5) -- (68/5, 73/10); 
\draw[-,semithick] (68/5,73/10) -- (70/5, 7);
\draw[-,semithick] (14,7) -- (15, 45/4 + 7 - 42/4); 
\draw[dashed, cyan, domain=-0.5:14] plot(\x,{4/(1+\x)});
\draw[dashed] (0,0) -- (15,15/2); 
\draw (24/5,4) node{$N(x)$};
\draw[-,semithick, draw=blue] (-1,4)--(0,4);
\draw[draw=blue, fill=none] plot [smooth, tension=0.1] coordinates {(0,4) (4/5,0) (24/5,20/29) (26/5,0) (46/5,20/51) (48/5,0) (68/5,20/73) (14,0)}; 
\end{tikzpicture}
\caption{A counterexample in the case of $\text{Lip}(N) = \frac{3}{4}.$ The dashed lines are the graphs of $\frac{x}{2}$ and $\frac{1}{1+x}$, and the non-dashes ones the graphs of $M^1_Nf$ and $N$ in this case.}
\end{figure}

\section{Comments and remarks} 

\subsection{Nontangential maximal functions and classical results} Here, we investigated mostly the regularity aspect of our family $M^{\alpha}$ of nontangential maximal functions, and looked for the sharp constants 
in such bounded variation inequalities. One can, however, still ask about the most classical aspect studied by Melas \cite{Melas}: Let $C_{\alpha}$ be the least constant such that we have the following inequality: 
\[ 
|\{ x \in \R\colon M^{\alpha}f(x) > \lambda \}| \le \frac{C_{\alpha}}{\lambda} \|f\|_1. 
\] 
By \cite{Melas}, we have that, for when $\alpha = 0$, then $C_0 = \frac{11 + \sqrt{61}}{12},$ and the classical argument of Riesz \cite{riesz} that $C_1 = 2.$ Therefore, $\frac{11 + \sqrt{61}}{12} \le C_{\alpha} \le 2, \,\,\forall \alpha \in (0,1).$ 
Nevertheless, the exact values of those constants is, as long as the author knows, still unknown. \\
\subsection{Bounded variation results for mixed Lipschitz and nontangential maximal functions} In Theorems \ref{lip} and \ref{lipcont}, we proved that, for the \emph{uncentered} Lipschitz maximal function $M_N$, we have sharp 
bounded variation results for $\text{Lip}(N) \le \frac{1}{2},$ and, if $\text{Lip}(N)>\frac{1}{2}$, we cannot even assure \emph{any} sort of bounded variation result. \\ 

We can ask yet another question: if we define the \emph{nontangential Lipschitz maximal function} 
\[ 
M^{\alpha}_Nf(x) = \sup_{|x-y| \le \alpha t \le \alpha N(x)} \frac{1}{2t} \int_{y-t}^{y+t} |f(s)| \mmd s,
\]
then what should be the best constant $L(\alpha)$ such that, for $\text{Lip}(N) \le L(\alpha),$ then we have some sort of bounded variation result like $\V(M^{\alpha}_Nf) \le A\V(f),$ and, for each $\beta > L(\alpha),$ there 
exists a function $N_{\beta}$ and a function $f_{\beta} \in BV(\R)$ such that $\text{Lip}(N_{\beta}) = \beta$ and $\V(M_{N_{\beta}}f_{\beta}) = +\infty?$ Regarding this question, we cannot state any kind of 
sharp constant bounded variation result, but the following is still attainable: it is possible to show that the first two lemmas of O. Kurka \cite{kurka} are adaptable in this context \emph{if} we suppose that 
\[ 
\text{Lip}(N) \le \frac{1}{\alpha + 1},
\]
and then we obtain our results, with a constant that is even independent of $\alpha \in (0,1).$ On the other hand, our example used above in the proof of Theorem \ref{lipcont} is easily adaptable as well, and therefore one might prove
the following Theorem: 
\begin{theorem} Let $\alpha \in [0,1]$ and $N$ be a Lipschitz function with $\text{Lip}(N) \le \frac{1}{\alpha + 1}.$ Then, for every $f \in BV(\R),$ we have that 
\[
\V(M^{\alpha}_Nf) \le C \V(f),
\]
where $C$ is independent of $N,f,\alpha.$ Moreover, for all $\beta > \frac{1}{\alpha + 1},$ there is a function $N_{\beta}$ and 
\[ 
f(x) = \begin{cases} 
        1, & \text{ if } x \in (-1,0);\cr
        0, & otherwise, \cr
       \end{cases}
\]
with $\text{Lip}(N_{\beta}) = \beta$ and $\V(M^{\alpha}_{N_{\beta}}f) = + \infty.$ 
\end{theorem}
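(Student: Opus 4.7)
The plan is to treat the two halves of the theorem separately, with the upper bound building on \cite{kurka} and the sharpness building on the construction of Theorem \ref{lipcont}.

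For the upper bound under $\text{Lip}(N) \le \frac{1}{\alpha+1}$, the strategy is to transplant the first two lemmas of \cite{kurka} to the present setting. Kurka's first lemma bounds the oscillation of the centered maximal function on intervals where the optimal radius is bounded below: if $Mf(x_0)$ is attained at a pair of radius $\tau$, then $Mf$ is Lipschitz on a neighborhood of $x_0$ of size proportional to $\tau$, with a constant of order $\tau^{-1}\|f\|_\infty$. The Lipschitz assumption on $N$ is what enables the same estimate to survive for $M^\alpha_N$: if $(y,t)$ is admissible at $x_0$, i.e. $|x_0-y| \le \alpha t$ and $t \le N(x_0)$, then the shortened pair $(y,t')$ with $t' = t - \text{Lip}(N)|x-x_0|$ is admissible at any $x$ sufficiently close to $x_0$. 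The numerical condition $\text{Lip}(N) \le \frac{1}{\alpha+1}$ is precisely what guarantees that, for every $(y,t)$ admissible at $x_0$, this perturbed pair keeps $t'$ strictly positive on a full neighborhood of $x_0$ while still meeting the nontangential constraint $|x-y| \le \alpha t'$. Kurka's second lemma is a stopping-time covering argument that assembles these local Lipschitz estimates into a global variation bound; since only soft features of the maximal function enter, the argument transfers verbatim and delivers a constant $C$ independent of $\alpha$, $N$ and $f$.

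For the sharpness direction, the plan is to adapt the construction from Section 4.3. Given $\beta > \frac{1}{\alpha+1}$ and the indicator $f = \chi_{(-1,0)}$, I would build $N_\beta$ inductively on two interleaving sequences $\{x_K\}$ and $\{x'_K\}$ of positive reals, prescribing $N_\beta$ to be piecewise linear with absolute slope exactly $\beta$. At $x_K$ we arrange $N_\beta(x_K) = \frac{x_K}{\alpha+1}$, which is the threshold below which no admissible pair $(y,t)$ for $M^\alpha_{N_\beta}$ can reach the support $(-1,0)$ of $f$, thereby forcing $M^\alpha_{N_\beta}f(x_K) = 0$. At $x'_K$ a slightly larger value of $N_\beta(x'_K)$ is chosen so that a nontangential pair catches an arc of $(-1,0)$ of length comparable to $1$, yielding $M^\alpha_{N_\beta}f(x'_K) \gtrsim \frac{1}{x'_K+1}$. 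The key arithmetic fact, which is exactly where the hypothesis $\beta > \frac{1}{\alpha+1}$ enters, is that the successive spacings $x'_K - x_{K-1}$ and $x_K - x'_K$ can all be chosen bounded below by an absolute positive constant while still producing $\text{Lip}(N_\beta) = \beta$; consequently $\{x'_K\}$ grows like an arithmetic progression and $\sum_K (x'_K+1)^{-1} = +\infty$. Since $M^\alpha_{N_\beta}f$ oscillates between $0$ and at least $c(x'_K+1)^{-1}$ along these two interlaced sequences, its total variation is infinite.

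The main obstacle is in the first half: one must carefully verify that none of the universal constants appearing in Kurka's covering argument deteriorate as $\alpha$ approaches the endpoints of $[0,1]$ or as $\text{Lip}(N)$ approaches $\frac{1}{\alpha+1}$. This is a mechanical but delicate exercise in unwinding the dependencies of the various admissibility windows, and in checking that the perturbation identity $t' = t - \text{Lip}(N)|x-x_0|$ interacts with the nontangential constraint uniformly in $\alpha$. The sharpness direction, by contrast, is essentially a transcription of the construction already carried out in the proof of Theorem \ref{lipcont}, with the only new ingredient being the replacement of the threshold $\frac{1}{2}$ by $\frac{1}{\alpha+1}$.
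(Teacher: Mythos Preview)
Your proposal is correct and follows essentially the same approach as the paper, which itself only sketches this result in the ``Comments and remarks'' section: adapt Kurka's first two lemmas for the upper bound (the condition $\text{Lip}(N)\le \frac{1}{\alpha+1}$ being exactly what is needed for the adaptation, with a constant independent of $\alpha$), and transplant the construction from Theorem~\ref{lipcont} with the threshold $\tfrac12$ replaced by $\tfrac{1}{\alpha+1}$ for the counterexample. Your write-up is in fact more detailed than what the paper provides.
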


\subsection{Increasing property of maximal $BV-$norms}\label{commrem} Theorem \ref{angle} proves that, if we define 
\[
B(\alpha) := \sup_{f \in BV(\R)} \frac{\V(M^{\alpha}f)}{\V(f)},
\]
then $B(\alpha) = 1$ for all $\alpha \in [\frac{1}{3},1].$ We can, however, with the same technique, show that $B(\alpha)$ is increasing in $\alpha > 0,$ and also that $B(\alpha) \equiv 1 \,\, \forall \alpha \in [\frac{1}{3},+\infty).$ 
Indeed, we show that, for $f \in BV(\R)$ and $\beta > \alpha,$ then $\V(M^{\alpha}f) \ge \V(M^{\beta}f).$ The argument uses the maximal attachment property in the following way: let, as usual, $(a,b)$ be an interval where $M^{\beta}f$ has 
a local maximum \emph{inside} it, at, say, $x_0$, and 
\[
M^{\beta}f(x_0) > \max(M^{\beta}f(a),M^{\beta}f(b)).
\]
Then, as we have that $M^{\beta}f \ge M^{\alpha}f$ \emph{everywhere}, we have two options: 
\begin{itemize} 
\item \textit{If $M^{\beta}f(x_0) =f(x_0),$} we do not have absolutely anything to do, as then also $M^{\alpha}f(x_0) = M^{\beta}f(x_0).$ 
\item \textit{If $M^{\beta}f(x_0) = u(y,t),$ for $t > 0$,} we have -- as in the proof of Theorem \ref{angle} -- that $(y-\beta t, y+\beta t) \subset (a,b).$ But it is then obvious that 
\[
M^{\alpha}f(y) \ge u(y,t) = M^{\beta}f(x_0) \ge M^{\beta}f(y) \ge M^{\alpha}f(y).
\]
\end{itemize}
Therefore, we have obtained a form of the maximal attachment property, and therefore we can apply the standard techniques that have been used through the paper to this case, and it is going to yield our result. \\ 

This shows directly that $B(\alpha) \le 1, \forall \alpha \ge 1,$ but taking $f(x) = \chi_{(0,1)}$ as we did several times shows that actually $B(\alpha)=1$ in this range.

\end{document}